\DeclareMathOperator{\PSL}{PSL} \DeclareMathOperator{\PGL}{PGL}
\DeclareMathOperator{\PSU}{PSU} 
 \DeclareMathOperator{\GL}{GL}
\DeclareMathOperator{\Sz}{Sz} \DeclareMathOperator{\End}{End}
\DeclareMathOperator{\Aut}{Aut}
\DeclareMathOperator{\der}{der} \DeclareMathOperator{\Frob}{Frob}
\DeclareMathOperator{\rk}{rk} \DeclareMathOperator{\Epi}{Epi}
\theoremstyle{plain}
\newtheorem{theorem}{Theorem}[section]
\newtheorem{prop}[theorem]{Proposition}
\newtheorem{thm}[theorem]{Theorem}
\newtheorem{lem}[theorem]{Lemma}
\newtheorem{cor}[theorem]{Corollary}
\newtheorem{rem}[theorem]{Remark}
\newtheorem{defn}[theorem]{Definition}
\theoremstyle{definition}
\newtheorem{conj}{Conjecture}
\newtheorem*{example}{Example}
\theoremstyle{remark}
\numberwithin{equation}{section}
\newcommand{\al}{\alpha}
\newcommand{\be}{\beta}
\newcommand{\ga}{\gamma}
\newcommand{\de}{\delta}
\newcommand{\Ga}{\Gamma}
\newcommand{\De}{\Delta}
\newcommand{\bF}{\mathbb F}
\newcommand{\qq}{\quad}
\newcommand{\bmat}{\left ( \begin{matrix} }
\newcommand{\emat}{\end{matrix} \right ) }
\newcommand{\lbl}[1]{\label{#1}}
\title[Connectivity of the PRA Graph of Simple Groups of Bounded Lie Rank]
{Connectivity of the Product Replacement Graph of Simple Groups of Bounded Lie Rank}
\author{Nir Avni, Shelly Garion}
\date{\today}
\begin{document}

\maketitle

\begin{abstract}
The \emph{Product Replacement Algorithm} is a practical algorithm for
generating random elements of a finite group. The algorithm can be
described as a random walk on a graph whose vertices are the
generating $k$-tuples of the group (for a fixed $k$).

We show that there is a function $c(r)$ such that for any finite simple group of Lie type, with Lie rank $r$, the Product Replacement Graph of the generating $k$-tuples is connected for any $k \geq c(r)$.

The proof uses results of Larsen and Pink~\cite{LP} and does not rely on the classification of finite simple groups.
\end{abstract}

\footnotetext{2000 {\it Mathematics Subject Classification:} 20D06, 20G40, 20D60. }

\section{Introduction}
\subsection{The Product Replacement Algorithm and its Graph}
The \emph{Product Replacement Algorithm (PRA)} is a practical
algorithm for generating random elements of a finite group. The algorithm was introduced
and analyzed in~\cite{CLMNO}. Although it has no rigorous
justification, practical experiments showed excellent performance.
It quickly became a popular algorithm for
generating random group elements, and was included in two frequently used group algebra packages: GAP and MAGMA. The algorithm
has been previously studied in~\cite{BP,GaP,LP1,P}.

The Product Replacement Algorithm can be described as a random walk
on a graph, called the {\em Product Replacement Graph} (or the {\em
PRA Graph}). It will be more convenient for us to look at the
following extended graph. Let $\Ga$ be a finite group and let
$d(\Ga)$ be the minimal number of generators of $\Ga$. We denote the
group generated by a set $S$ by $\langle S \rangle$. For any $k \geq
d(\Ga)$, let
\[
    V_k(\Ga)=\{(g_1,\ldots,g_k) \in \Ga^k: \langle g_1,\ldots,g_k
    \rangle = \Ga\}
\]
be the set of all \emph{generating $k$-tuples} of $\Ga$.

The \emph{extended PRA graph}, denoted by $\tilde X_k(\Ga)$, has
$V_k(\Ga)$ as its set of vertices. The edges of the extended PRA graph correspond to the
following so-called \emph{Nielsen moves} $R_{i,j}^{\pm}$,
$L_{i,j}^{\pm}$, $P_{i,j}$, $I_{i}$ for $1 \leq i \neq j \leq k$,
where
\begin{align*}
    R_{i,j}^{\pm} &:  (g_1,\ldots,g_i,\ldots,g_k) \rightarrow
    (g_1,\ldots,g_i\cdot g_j^{\pm 1},\ldots,g_k) \\
    L_{i,j}^{\pm} &:  (g_1,\ldots,g_i,\ldots,g_k) \rightarrow
    (g_1,\ldots,g_j^{\pm 1} \cdot g_i,\ldots,g_k) \\
    P_{i,j} &:  (g_1,\ldots,g_i,\ldots,g_j,\ldots,g_k) \rightarrow
    (g_1,\ldots,g_j,\ldots,g_i,\ldots,g_k)  \\
    I_{i} &:  (g_1,\ldots,g_i,\ldots,g_k) \rightarrow
    (g_1,\ldots,g_i^{-1},\ldots,g_k)
\end{align*}

Strictly speaking, the Product Replacement Algorithm is a random
walk on a subgraph $X_k(\Ga)$ of $\tilde X_k(\Ga)$, which is obtained by
removing the edges corresponding to the $P_{i,j}$ and $I_{i}$. The
output of the algorithm is a random element chosen from the tuple at
the end of the random walk. As observed in \cite[Proposition
2.2.1]{P}, when $k\geq d(\Ga)+1$, the graph $X_k(\Ga)$ is connected
if and only if $\tilde X_k(\Ga)$ is connected.

\subsection{$T$-Systems}
There is another point of view on the PRA graph, which was,
historically, the motivation for its study, even before
\cite{CLMNO}. Given a finite group $\Ga$ and a fixed integer $k \geq
d(\Ga)$, a {\em presentation} of $\Ga$ by $k$ generators is an
epimorphism $\phi: F_k \twoheadrightarrow \Ga$, where $F_k$ is the
free group on $k$ generators\footnote{The standard definition of a
presentation of $\Ga$ is an epimorphism $\phi$ as above, together
with a generating set for the kernel of $\phi$. We will, however,
identify two presentations if they have the same kernel.}. One can
identify the set of epimorphisms $\Epi(F_k \twoheadrightarrow \Ga)$
with $V_k(\Ga)$. The group $\Aut(F_k) \times \Aut(\Ga)$ acts on
$\Epi(F_k\twoheadrightarrow\Ga)$ by $(\tau, \sigma): \phi \mapsto
\sigma \circ \phi \circ \tau^{-1}$, where $\tau \in \Aut(F_k),
\sigma \in \Aut(\Ga)$ and $\phi \in \Epi(F_k \twoheadrightarrow
\Ga)$. The question whether this action is transitive was raised by
B.H. Neumann and H. Neumann in \cite{NN} and studied further in
\cite{Du1,Du2,E,Gi,GP,Ne,P}. An orbit of $\Aut(F_k)\times\Aut(\Ga)$
in $V_k(\Ga)$ is called a {\em system of transitivity}, and also
{\em $T$-system} or {\em $T_k$-system}, when we specify the value of
$k$.

It is well-known that $\Aut(F_k)$ is generated by the Nielsen moves
$\{ R_{i,j}^{\pm},L_{i,j}^{\pm},P_{i,j},I_i \}_{1 \leq i,j \leq k}$,
viewing them as automorphisms of $F_k$. Thus there is a map from the
connected components of $\tilde X_k(\Ga)$ to the set of
$T_k$-systems. As observed in \cite[Proposition 2.4.1]{P}, when $k
\geq 2d(\Ga)$, the graph $\tilde X_k(\Ga)$ is connected if and only
if $\Ga$ has only one $T_k$-system.

\subsection{Connectivity of the PRA Graph}
The purpose of this paper is to study the connectivity of the
extended PRA graph $\tilde X_k(\Ga)$, where $\Ga$ is a finite simple
group of Lie type. We start with some historical background.

Let $\Ga$ be a finite group, and let $k \geq d(\Ga)$ be an integer.
There are several examples of groups $\Ga$ such that for $k=d(\Ga)$,
the graph $\tilde X_k(\Ga)$ is not connected (see
\cite{Du1,GS,GP,Ne,P}). However, there are no known examples of
groups $\Ga$ such that $\tilde{X}_k(\Ga)$ is disconnected for $k
\geq d(\Ga)+1$. Moreover, Dunwoody~\cite{Du2} has proved that for a
finite solvable group $\Ga$, the graph $\tilde X_k(\Ga)$ is
connected whenever $k \geq d(\Ga)+1$. The following conjecture
arises.

\begin{conj} \label{conj_gen}
For all finite groups $\Ga$, if $k\geq d(\Ga)+1$, then $\tilde X_k(\Ga)$ is connected.
\end{conj}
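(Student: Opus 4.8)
The plan is to attack Conjecture~\ref{conj_gen} by induction on the composition length of $\Ga$, reducing it to a statement about finite simple groups; the base case of cyclic $\Ga$ is trivial, and the solvable case is Dunwoody's theorem quoted above. For the inductive step fix $k\geq d(\Ga)+1$, pick a minimal normal subgroup $N\trianglelefteq\Ga$, and set $\bar\Ga=\Ga/N$. Given generating $k$-tuples $g=(g_1,\dots,g_k)$ and $h=(h_1,\dots,h_k)$ of $\Ga$, their reductions $\bar g,\bar h$ generate $\bar\Ga$, and since $k\geq d(\Ga)+1\geq d(\bar\Ga)+1$ the inductive hypothesis gives a path of Nielsen moves from $\bar g$ to $\bar h$ in $\tilde X_k(\bar\Ga)$. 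Because each Nielsen move is given by the same group word upstairs and downstairs, this path lifts to a path in $\tilde X_k(\Ga)$ starting at $g$; its endpoint has the same image in $\bar\Ga$ as $h$. So the problem is reduced to the \emph{fibre problem}: any two generating $k$-tuples of $\Ga$ lying over the same generating tuple of $\bar\Ga$ must be connected in $\tilde X_k(\Ga)$.

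The fibre problem splits according to the isomorphism type of $N$. When $N$ is elementary abelian one studies the $N$-coordinates $(n_1,\dots,n_k)$, $n_i\in N$, for which $(g_1n_1,\dots,g_kn_k)$ generates $\Ga$; by Gasch\"utz's lemma this set is governed by the module structure of $N$ over the group ring of $\bar\Ga$, and the Nielsen moves act on the $n_i$ compatibly with that structure, so the fibre problem becomes a question about generating this module. Here the one spare generator guaranteed by $k\geq d(\Ga)+1$ is precisely what is needed to run Dunwoody's argument, and this case goes through. The remaining case, which is the crux of the conjecture, is $N$ nonabelian, so $N\cong S^m$ with $S$ a nonabelian finite simple group: one must understand how Nielsen moves permute and twist the $m$ copies of $S$, and this forces genuine control over the connectivity of the PRA graph of $S^m$ relative to a generating tuple of $\Ga/N$ --- which in turn rests on the connectivity of $\tilde X_k(S)$ for the finite simple groups $S$ themselves.

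This last point is the main obstacle, and it is why Conjecture~\ref{conj_gen} is still open and why the body of this paper proves only the uniform-in-rank Main Theorem. Already the diagonal copy $S\leq S\times S$ shows that the fibre problem encodes information about $T$-systems of powers of a simple group; and for a single nonabelian simple group $S$ (which is $2$-generated) the relevant graphs begin at $k=3$, where connectivity of $\tilde X_3(S)$ is not known in general. The strategy adopted in the sequel is therefore twofold: (i) carry out the reduction above so that the only missing ingredient is a connectivity statement for finite simple groups, and (ii) establish such a statement, with a threshold $c(r)$ depending only on the Lie rank $r$, by feeding the Larsen--Pink description of finite subgroups of linear groups into explicit Nielsen-move computations inside the simple group. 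The groups left untouched by this approach --- the alternating groups and the simple groups of unbounded Lie rank, for which no analogue of $c(r)$ is presently available --- are exactly the cases in which the full conjecture remains out of reach.
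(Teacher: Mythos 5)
The statement you are asked about is Conjecture~\ref{conj_gen}, not a theorem: the paper offers no proof of it, and indeed explicitly presents it (and its simple-group case, Wiegold's Conjecture~\ref{conj:Weigold}) as open. Your text is honest about this --- it is a reduction strategy plus an admission that the decisive case is unresolved --- but as a proof of the stated conjecture it therefore has a genuine and unclosable gap, and you should not present it as a proof.

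Concretely: the first reduction (induct on composition length, lift a Nielsen path from $\tilde X_k(\Ga/N)$ to $\tilde X_k(\Ga)$, and thereby reduce to connecting two generating $k$-tuples with the same image modulo a minimal normal subgroup $N$) is standard and sound, and the elementary abelian fibre case is indeed handled by the Gasch\"utz--Dunwoody argument you sketch (though you assert rather than carry out the key module-theoretic step). The gap is the case $N\cong S^m$ with $S$ a nonabelian simple group. There you say one ``must understand how Nielsen moves permute and twist the $m$ copies of $S$,'' but no argument is given, and none is available: this fibre problem contains, as a special case, the connectivity of $\tilde X_3(S)$ for an arbitrary nonabelian finite simple group $S$, which is exactly Wiegold's conjecture and is known only in the handful of cases listed in Theorem~\ref{simple_conn}. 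The Main Theorem~\ref{thm:main.theorem} of this paper cannot close that gap either: it gives connectivity only for $k\geq c(r)$ with $c(r)$ large (at least exponential in the Lie rank $r$), so it says nothing about $k=3$, nothing about alternating groups, and nothing uniform over groups of unbounded rank. Your proposal is best read as an explanation of \emph{why} Conjecture~\ref{conj_gen} reduces to the simple-group case, not as a proof of it.
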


A particularly interesting case is when the group $\Ga$ is a finite
simple group. In this case, $d(\Ga)=2$, and one can reformulate
Conjecture~\ref{conj_gen} as follows.
\begin{conj}[Wiegold] \label{conj:Weigold}
If $\Ga$ is a finite simple group and $k\geq 3$, then $\tilde
X_k(\Ga)$ is connected.
\end{conj}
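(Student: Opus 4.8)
Conjecture~\ref{conj:Weigold} is still open, and we can only outline the program it suggests, indicating where the results of this paper fit and what remains. Every nonabelian finite simple group $\Gamma$ has $d(\Gamma)=2$, so for $k\ge 4=2d(\Gamma)$ the statement is equivalent, by the reformulation in the Introduction, to the assertion that $\Gamma$ has a unique $T_k$-system; this is the more flexible formulation, and it is the one that the main result of this paper settles in the range where $\Gamma$ is of Lie type of Lie rank $r$ and $k\ge c(r)$. Moreover it is known (see, e.g.,~\cite{P}) that connectivity of $\tilde X_k(\Gamma)$ implies connectivity of $\tilde X_{k+1}(\Gamma)$, so by this monotonicity it would suffice to prove that $\tilde X_3(\Gamma)$ is connected for every finite simple $\Gamma$. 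In other words, the substance of the conjecture is the case of small $k$, and above all $k=3$.

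The natural way to organise the remaining cases is via the classification of finite simple groups, though in the spirit of the present paper one would ideally want a classification-free argument. Cyclic groups of prime order are covered by Dunwoody's theorem on solvable groups~\cite{Du2}. For each of the $26$ sporadic groups the graph $\tilde X_3(\Gamma)$ is finite, so its connectivity is a finite verification; for the few sporadic groups too large for a direct search one would instead estimate $|V_3(\Gamma)|$ via Philip Hall's formula $|V_k(\Gamma)|=\sum_{H\le\Gamma}\mu(H,\Gamma)\,|H|^{k}$, bound the sizes of the $T_3$-classes by character sums, and check that a single class already exhausts the count. The genuinely infinite parts are the alternating groups $A_n$ and the groups of Lie type of unbounded Lie rank; for these one hopes for a uniform argument of the same counting type --- estimating the Eulerian function $|V_k(\Gamma)|$ and showing that every $T_k$-class is so large that there can be only one --- but the known estimates of this kind, from the theory of random generation of finite simple groups, are effective only when $k$ grows with the group, and certainly not for $k=3$.

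Finally, for the groups of Lie type of bounded Lie rank, the present paper already proves connectivity once $k\ge c(r)$; what Conjecture~\ref{conj:Weigold} asks here is to push $c(r)$ down to $3$. That would require using the Larsen--Pink-type dichotomy of~\cite{LP} far more sharply than we do --- or replacing it by an input that does not degrade as $k$ decreases --- so as to control the $T_3$-systems directly. I expect this to be the main obstacle, and indeed the obstacle common to all the cases above: there is at present no technique that forces uniqueness of $T_3$-systems uniformly across an infinite family of simple groups, the counting estimates becoming useless as $k$ decreases and the geometric estimates of this paper becoming useless as the rank grows. A proof of Conjecture~\ref{conj:Weigold} thus seems to require a genuinely new idea tailored to the case $k=3$.
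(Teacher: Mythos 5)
This item is a conjecture, not a theorem: the paper offers no proof of Conjecture~\ref{conj:Weigold} (its main theorem only establishes the case of Lie type groups of bounded rank for $k\geq c(r)$), so treating the statement as an open program rather than attempting a proof is the right call, and most of your survey of the known partial results is accurate.

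There is, however, one genuine error in your reduction. You assert, citing~\cite{P}, that connectivity of $\tilde X_k(\Ga)$ implies connectivity of $\tilde X_{k+1}(\Ga)$, and conclude that the conjecture reduces to the single case $k=3$. No such monotonicity is known. The natural argument would take a generating $(k+1)$-tuple, discard one coordinate, apply connectivity of $\tilde X_k$ to the rest, and then trivialize the extra coordinate --- but this only works if the tuple is Nielsen-equivalent to a \emph{redundant} one, i.e.\ one in which some $k$ of the $k+1$ coordinates already generate. Producing such a redundant tuple is precisely the hard step of every known connectivity proof, including the one in this paper (Proposition~\ref{prop:get.redundant.element} and Corollary~\ref{cor:get.redundant} are exactly this step, carried out separately for each $k\geq c$), so the implication you invoke is essentially equivalent to the difficulty you are trying to bypass. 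Internal evidence confirms this: Theorem~\ref{simple_conn}(e) records connectivity for $A_n$, $6\leq n\leq 11$, only at $k=3$; if connectivity propagated upward in $k$, that result would immediately extend to all $k\geq 3$ and would have been stated as such. Likewise Gilman~\cite{Gi} and Evans~\cite{E} must argue for every $k\geq 3$ separately. So your program should not collapse the conjecture to $k=3$; all $k\geq 3$ remain open in general, even though $k=3$ is presumably the hardest case.
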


This conjecture was originally stated by Wiegold for $T$-systems (see \cite[Conjecture 1.2]{E}),
and it has been verified only in the few following cases.

\begin{theorem} \label{simple_conn}
$\tilde X_k(\Ga)$ is connected in the following cases:
\begin{enumerate}
\renewcommand{\theenumi}{\alph{enumi}}
\item \label{t:PSLp} \cite{Gi}.
$\Ga=\PSL(2,p)$, where $p\geq 5$ is prime and $k \geq 3$.
\item \label{t:PSL2} \cite{E}.
$\Ga=\PSL(2,2^m)$, where $m\geq 2$ and $k \geq 3$.
\item \label{t:PSLq} \cite{Ga}.
$\Ga=\PSL(2,q)$, where $q=p^e$ is an odd prime power and $k \geq 4$.
\item \label{t:Su} \cite{E}.
$\Ga=\Sz(2^{2m+1})$, where $m\geq 2$ and $k \geq 3$.
\item \label{t:An} \cite{CP, Da}.
$\Ga=A_n$, where $6 \leq n \leq 11$ and $k=3$.
\end{enumerate}
\end{theorem}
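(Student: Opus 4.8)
Since the five parts of this theorem are each established in the cited references, a proof here really amounts to quoting the appropriate source; still, it is worth spelling out the shape of the arguments, since no single one of them covers all five cases. For parts (a)--(c), concerning $\PSL(2,q)$, the classical input is the parametrization (going back to Macbeath) of the generating pairs $(x,y)$ of $\PSL(2,q)$: up to the action of $\Aut(\PSL(2,q))$ such a pair is essentially determined by the trace triple $(\tr x,\tr y,\tr xy)$ of lifts to $\SL(2,q)$, and the admissible triples can be listed explicitly. The plan for part (a), where $k=d(\Ga)=2$ and $q=p$ is prime, is to show that the Nielsen moves act transitively on each trace class and, moreover, connect the distinct classes: the moves $R_{1,2}^{\pm}$ and $L_{1,2}^{\pm}$ transform traces according to the trace identities in $\SL_2$, so the Nielsen orbit of a generating pair sweeps out a large, controllable family of trace triples, and one verifies that this family exhausts the admissible ones. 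For parts (b) and (c) one has a spare coordinate ($k\geq 3$, respectively $k\geq 4$): the plan is first to use Nielsen moves, with the help of the extra coordinates, to bring an arbitrary generating $k$-tuple to the form $(x,y,1,\dots,1)$ with $\langle x,y\rangle=\Ga$ --- this reduction is itself part of the work --- and then to rerun the $k=2$ analysis, now using the freedom provided by the trivial coordinates to move between trace classes that the basic moves do not link directly.

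For part (d), $\Sz(2^{2m+1})$, I would proceed analogously, replacing the trace parametrization by a description of the $\Aut(\Ga)$-classes of generating pairs of the Suzuki groups --- read off from their subgroup structure (the maximal tori, the Frobenius-type normalizers, the action on the Suzuki ovoid) --- together with a check that the Nielsen moves connect these classes; this is what Evans carries out. Part (e), the alternating groups $A_n$ with $6\leq n\leq 11$ and $k=3$, is of a different character: here $V_3(A_n)$ is a large but finite vertex set, and connectivity of $\tilde X_3(A_n)$ is confirmed by direct computer search over the graph.

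The main obstacle --- and the reason this theorem records only scattered small cases rather than the Wiegold Conjecture --- is exactly that each of these arguments is bespoke: it depends on an explicit inventory of the $\Aut(\Ga)$-classes of generating pairs of one particular family, together with a hands-on verification that the Nielsen moves connect that inventory, and there is no uniform mechanism doing this for a general finite simple group, nor even for a general family of fixed Lie rank. Producing such a mechanism, at the cost of letting $k$ grow with the Lie rank and of bringing in the structural results of Larsen and Pink, is the business of the rest of this paper.
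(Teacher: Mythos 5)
The paper offers no proof of this theorem beyond the citations; it is a catalogue of known results, and you are right that a ``proof'' here amounts to quoting the appropriate sources. Your thumbnail sketches of those sources are broadly accurate: the $\PSL(2,q)$ cases rest on the trace parametrization of generating pairs of $\SL(2,q)$ together with a Nielsen-move analysis, the Suzuki case is treated by Evans through the subgroup structure of $\Sz(q)$, and the small alternating cases are settled by direct computation.

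However, you misstate part (a). The theorem asserts connectivity of $\tilde X_k(\PSL(2,p))$ for $k\geq 3$, not for $k=d(\Ga)=2$ as you write, and Gilman's argument, like the ones for (b) and (c), does make essential use of a spare coordinate. Indeed the $k=2$ claim would be false: the paper notes in the discussion preceding Conjecture~\ref{conj_gen} that $\tilde X_{d(\Ga)}(\Ga)$ can be disconnected, and the references \cite{GS,GP} cited there show in particular that the number of $T_2$-systems of $\PSL(2,p)$ grows without bound as $p\to\infty$. So the mechanism you describe for (a) --- the Nielsen orbit of a single generating pair sweeping out all admissible trace triples --- is not available at $k=2$; the actual argument in case (a) has the same shape as the one you give for (b) and (c), where the extra coordinate provides the room needed to pass between trace classes.
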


In this paper we prove the existence of a function $c(r)$ such that
if $\Ga$ is a finite simple group of Lie type with Lie rank $r$,
then the graph $\tilde X_k(\Gamma)$ is connected for all $k \geq
c(r)$.

We note that for such a group $\Gamma$ of Lie type, whose rationality
field has $q = p^e$ elements, one can deduce the existence of a
constant $C$, such that the graph $\tilde X_k(\Gamma)$ is connected
when $k \geq C r^2 e$, by invoking results of~\cite{Ni} and
~\cite[Theorem 0.2]{LP}. The latter bound, however, depends on the
size of the defining field of $\Gamma$, while our bound  depends only on the Lie rank of $\Gamma$; it is
independent on the size of the defining field of $\Gamma$.
Unfortunately, the bound we obtain is at least exponential in the
Lie rank $r$.

\subsection{Main Theorem}
We formulate our result more precisely. For a group $H$, we denote
the center of $H$ by $Z(H)$, and denote the first derived subgroup
of $H$ by $H^{\der}$. Let $G_r$ be a (possibly twisted) simple Lie
type of rank $r$. Recall that if $\De = G_r(q)$ is a finite simple
group of Lie type over a field of size $q$, then any
perfect\footnote{A group $H$ is called {\em perfect} if $H=H^{\der}$.}
central extension $\Ga$ of $\De$ is called a finite {\em
quasi-simple} group of Lie type $G_r$. The {\em Lie rank} of $\Ga$
is defined to be the Lie rank of $\De$. The Lie type and Lie rank of
$\Ga$ are easily computed from $\De$, as $\De=\Ga/Z(\Ga)$. For an
introduction to the finite simple groups of Lie type (and much
more) see \cite{Ca2} and \cite{Ca1}.

\begin{thm} \lbl{thm:main.theorem} For every $r$ there is a constant $c(r)$ such that if $\Ga$ is a finite, quasi-simple group of Lie type with rank at most $r$, then for every integer $k \geq c(r)$, the extended Product Replacement Graph $\tilde X_k(\Ga)$ is connected.
\end{thm}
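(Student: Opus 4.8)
The plan is to induct on the Lie rank $r$, using the theorem for smaller rank (and the solvable case, handled by Dunwoody) as the base of the induction. Fix a quasi-simple $\Ga$ of Lie type $G_r(q)$ and a generating $k$-tuple $(g_1,\dots,g_k)$ with $k$ large. The first step is to use Nielsen moves to bring the tuple into a convenient normal form. A standard observation is that inside $\tilde X_k(\Ga)$ one can, starting from any generating tuple, reach a tuple whose first few coordinates lie in (and generate) a prescribed subgroup $H\leq \Ga$, provided $H$ together with the remaining coordinates still generates $\Ga$; concretely, if $h_1,\dots,h_m$ generate $H$ and $H$ together with $g_{m+1},\dots,g_k$ generates $\Ga$, then $(h_1,\dots,h_m,g_{m+1},\dots,g_k)$ is in the same component as $(g_1,\dots,g_k)$ once $k$ exceeds $m$ by enough — this is the usual "absorb extra generators" trick: multiply $g_{m+1}$ by words in the $h_i$ and in the $g_j$ to reach any single desired element while keeping the tuple generating, then repeat. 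The point of invoking Larsen--Pink is that $\Ga$, being of bounded rank, contains a subgroup $H$ that is either of strictly smaller Lie rank or "small" in a controlled sense (e.g. of bounded order, or $p$-local, or with a large solvable normal subgroup), and moreover any proper subgroup generated by two random-ish elements is forced into such a family. So the strategy is: move the tuple so that a controlled-size block generates a "tame" subgroup $H$, apply the inductive hypothesis / Dunwoody inside $H$ to shuffle that block freely, and use that freedom to change the other coordinates at will.

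The key steps, in order, are as follows. (1) Prove a \emph{transfer lemma}: if $k \ge m+2$ and $H=\langle h_1,\dots,h_m\rangle$, and $H$ together with $g_{m+1},\dots,g_k$ generates $\Ga$, then every tuple of the form $(h_1,\dots,h_m, g'_{m+1},\dots,g'_k)$ with $\langle g'_{m+1},\dots,g'_k, H\rangle=\Ga$ lies in the same component; this is elementary bookkeeping with $R_{i,j}^\pm,L_{i,j}^\pm$. (2) Using Larsen--Pink, show that for $k$ large one can Nielsen-move a given generating $k$-tuple to one in which $g_1,\dots,g_m$ (for $m=m(r)$ bounded) generate a subgroup $H$ of one of finitely many types, each of which is itself a (quasi-)simple group of Lie type of rank $\le r-1$, a torus or unipotent group (solvable), or a group of order bounded by $f(r)$. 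The idea for this step: pick the $g_i$'s themselves to generate such an $H$ — e.g. take a regular semisimple element $s$ of a rank-one or corank-one Levi subgroup together with a suitable unipotent partner; the reachability of such a configuration is again handled by the transfer lemma, since the needed generators of $H$ can be built as words in the original $g_i$. (3) Inside $H$, invoke the theorem inductively (if $H$ is quasi-simple of smaller rank), or Dunwoody (if $H$ is solvable), or brute-force connectivity of $\tilde X_{k-\text{const}}(H)$ for $|H|\le f(r)$ — in each case $\tilde X$ of $H$ is connected once $k$ exceeds a bound depending only on $r$. This lets us replace $(g_1,\dots,g_m)$ by \emph{any} generating tuple of $H$ without leaving the component. (4) Finally, run a standard argument — the "single orbit" or "Wiegold-type" reduction: with the first block free inside $H$, and $H$ containing enough of $\Ga$ to see all of $\Ga$ by adjoining one more element, show any two generating $k$-tuples of $\Ga$ are connected, e.g. by first normalizing everything to a fixed reference tuple.

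The main obstacle I expect is step (2): extracting from the Larsen--Pink structure theorem a subgroup $H$ that is simultaneously (a) \emph{reachable} by Nielsen moves from an arbitrary generating tuple, (b) \emph{tame} (lower rank or solvable or bounded), and (c) \emph{large enough} that $H$ plus $O(1)$ further elements still generates $\Ga$. Reachability forces $H$ to be generated by short words in the original generators, which one does not control; the way around this is that one does not need to reach a \emph{specific} $H$, only \emph{some} $H$ in the allowed family, and then one exploits the fact that a generic pair of elements of $\Ga$ already generates a proper subgroup only when that subgroup is in a Larsen--Pink-controlled list — so by perturbing two coordinates via Nielsen moves one can \emph{land} in such a subgroup while the rest of the tuple still generates $\Ga$. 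Making this perturbation argument precise (ensuring one can hit a proper subgroup of the right kind, with the right "complement" behavior, using only moves of bounded length) is the technical heart of the proof, and is where the bound $c(r)$ ends up being at least exponential in $r$: the family of tame subgroups, and the word-length needed to reach them, grows with the rank.
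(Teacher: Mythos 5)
Your proposal diverges fundamentally from the paper's argument, and the divergence points to a genuine gap.

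The crux of your plan is step (2): Nielsen-move the tuple so that a bounded initial block generates a subgroup $H$ that is of \emph{strictly smaller Lie rank}, or solvable, or of bounded order, so that you can feed $H$ to the inductive hypothesis (or to Dunwoody). But this is precisely what Larsen--Pink does \emph{not} provide. The structure theorem for sufficiently general subgroups (Theorem~\ref{thm:LP} in the paper) says that a sufficiently general $\De\subset G$ is squeezed between $(G^F)^{\der}$ and $G^F$ for some Frobenius $F$ --- i.e.\ $\De$ is (essentially) a quasi-simple group of the \emph{same} Lie type and \emph{same} Lie rank as $\Ga$, just over a possibly smaller field. So landing in a sufficiently general subgroup never drops the rank, and an induction on $r$ gains you nothing. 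Conversely, if the block is \emph{not} sufficiently general then it is contained in a proper algebraic subgroup of $G$, and you have no control over its structure (nor, a fortiori, over the connectivity of its PRA graph). There is no Larsen--Pink-supplied middle ground of ``controlled but smaller-rank or solvable'' subgroups of the kind your induction requires.

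The reachability issue is also more serious than you concede. Your transfer lemma lets you replace coordinates that are already redundant; it does not let you \emph{create} redundancy. To put prescribed generators of a specific Levi subgroup or torus into the first block while keeping the whole tuple generating, you would need $g_1$ to already be expressible as a word in the remaining coordinates --- which is exactly the conclusion we are trying to reach, not a tool we can assume. The paper instead uses a representation-theoretic pigeonhole (Lemma~\ref{lem:invariant.subspace}, Proposition~\ref{prop:subset.to.suff.general}) to show that a bounded \emph{subset of the given generators} already generates a sufficiently general subgroup $\De_0$ --- no perturbation, no specific target subgroup. The whole point of their argument is then to accept that $\De_0$ has the same rank as $\Ga$ (only a smaller field), and to run a clever bootstrap: split the remaining block into two halves generating $\De_1,\De_2 \supset \De_0$; if $\bF_{\De_1}=\bF_{\De_2}$ the tuple is easily made redundant (Remark~\ref{rem:schur.mult}); if not, Lemma~\ref{lem:enlarge.defn.field.} --- using regular semisimple elements, centralizer size control (Proposition~\ref{prop:size.centralizer}, Proposition~\ref{prop:centralizer.regular}), the bound on conjugacy classes (Proposition~\ref{prop:conj.classes}), and the Gasch\"utz lemma --- produces Nielsen moves strictly increasing $|\bF_{\De_1}|+|\bF_{\De_2}|$, which is a bounded quantity, so finitely many iterations force the equal-field case. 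This is an induction on field size inside a fixed rank, not an induction on rank. You should abandon the induction on $r$ and instead pursue a same-rank, field-enlargement argument of this type.
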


The main step in the proof is to show that if $k$ is large enough,
then any generating $k$-tuple $(g_1,\dots,g_k)$ can be connected by
Nielsen moves to a redundant one, i.e., a tuple such that one of its coordinates is the identity element of $\Ga$.

\subsection{Organization}
For the convenience of the reader, we describe the organization of
the paper, as well as give a bird's eye view of the proof. In
Section 2, we collect facts about subgroups of finite simple groups
of Lie type. In Section 3 we show that there is a constant
$c_1=c_1(r)$, such that for any $k\geq c_1$, every generating tuple
$(g_1,\ldots ,g_k)$ can be connected to a tuple $(g_1',\ldots
,g_k')$, and the subgroup $\De_0$, generated by
$g_1',\dots,g_{c_1}'$, is also a quasi-simple group of the same Lie
type as $\Ga$ (possibly over a smaller field). This is done by using
\cite{LP}.

Subgroups of $\Ga$ that contain $\De_0$ are highly restricted.
Avoiding some delicate issues, if $\De_0=G_r(q)$ and $\Ga=G_r(q^e)$,
then a subgroup of $\Ga$ containing $\De_0$ is of the form
$G_r(q^d)$, for some $d$ dividing $e$. Using this fact, we prove in
Section 4 that there is a constant $c_2=c_2(r)$ such that, our
generating tuple can be connected to a generating tuple
$(g_1'',\ldots ,g_k'')$, and the subgroup generated by
$g_1'',\dots,g_{c_1}'',\dots,g_{c_1+c_2}''$ is, in fact, equal to
$\Ga$. This way we get the desired redundant generators. Our method
here is a generalization of \cite{E}.

The proof is slightly simpler in the case that the characteristic is different from 2 and 3. We assume this throughout the paper. In the appendix, we show how to adapt the proof when the characteristic is equal to 2 or 3.

\subsection*{}{\em Acknowledgments.}
This paper is part of the authors' Ph.D. studies under the
guidance of Alex Lubotzky, whom we would like to thank for
support and good advice. We are also grateful to Michael Larsen,
Tsachik Gelander and Aner Shalev for many useful discussions, and to the referee for his remarks on this article.

The second author acknowledges the support of the Israeli Ministry of Science, Culture, and Sport.

\section{Sufficiently General Subgroups of Finite Simple Groups of Lie Type}
\subsection{Notations} Let $p$ be a prime number and $q=p^e$ be a power of $p$. We denote the finite field of size $q$ by $\bF _q$. The algebraic closure of $\bF _p$ (which is equal to the algebraic closure of $\bF _q$) will be denoted by $\overline{\bF _p}$. For every $n$ we denote the set of invertible $n$ by $n$ matrices with entries in a field $\bF$ by $\GL_n(\bF )$.

Let $G \subset \GL_n(\overline{\bF _p})$ be a simple, connected and adjoint algebraic group defined over $\bF _q$. Starting from Subsection \ref{subsec:F.Ga}, we shall assume that $p\neq2,3$. For every $f$, we denote the intersection $G \cap \GL_n(\bF _{q^f})$ by $G(\bF _{q^f})$.
Excluding finitely many cases, the subgroup $G(\bF _q)^{\der}$ is a simple group (this follows from \cite[\S 2.9]{Ca2} and \cite[\S 11.1]{Ca1}).

The construction above gives many of the finite simple groups of Lie type, but not all of them. In order to get all finite groups of Lie type, we make the following definition:
\begin{defn}\lbl{defn:group.Lie.type}\cite[\S 1.17]{Ca2}. Let $p$ be a prime number and let $n$ be a natural number.
 \begin{enumerate}
\item For every power $q$ of $p$, the {\em standard Frobenius map} of $\bF _q$ is the function
\[
\Frob_q :\GL_n(\overline{\bF _p})\to \GL_n(\overline{\bF _p})
\]
given by
\[
(x_{i,j}) \mapsto (x_{i,j}^q) .
\]
\item Let $G\subset \GL_n(\overline{\bF _p})$ be an algebraic group. A homomorphism $F:G \to G$ is called a {\em Frobenius map} if there are natural numbers $k$,$m$ and $e$, and a faithful representation $\rho :G \to \GL_m(\overline{\bF _p})$, such that $F^k$ is the restriction of the standard Frobenius map $\Frob_{\bF _{p^e}} : \GL_m(\overline{\bF _p}) \to \GL_m(\overline{\bF _p})$ to $\rho (G)$. In this case we define $q_F = p^{e/k}$.
\item Let $G\subset \GL_n(\overline{\bF _p})$ be an algebraic group and let $F:G\to G$ be a Frobenius map. We denote
\[
G^F = \{ g\in G | F(g)=g \} .
\]
\item A finite simple group of Lie type is a simple group of the form $(G^F)^{\der}$, where $G$ is a connected, simple and adjoint algebraic group, and $F$ is a Frobenius map.
\end{enumerate}
\end{defn}

\begin{example} \begin{enumerate}
\item For every $m$, and every prime power $q$, there are only finitely many fixed points of $\Frob_q$ in $\GL_m(\overline{\bF_p})$. Therefore, the same is true for any Frobenius map. Since $F$ is a homomorphism, we get that  $G^F$ is always a finite group.
\item If $G\subset \GL_n(\overline{\bF_q})$ is an algebraic group defined over $\bF_q$, and if $F=\Frob_q$ is the Frobenius map that corresponds to the inclusion $\rho :G \hookrightarrow \GL_n$, then $G^F = G(\bF _q)$.
\item As an example of a finite simple group of Lie type that is not obtained from the standard Frobenius map, one can check that if $G=\PSL_n(\overline{\bF_p})$, then the map $F$ defined by
\[
g \mapsto \Frob_p ((g^{T})^{-1})
\]
is a Frobenius map. The group of fixed points of $F$ is called the Unitary (or twisted $A_{n-1}$) group, and is denoted by $\PSU_n(p)$.
\end{enumerate}
\end{example}

\begin{rem} \lbl{rem:size.field} If $G^F=G(\bF_q)$, then $q_F$ is equal to the size of the field $\bF_q$. In the twisted case, the group $G^F$ does not have a field of rationality, but the number $q_F$ plays the role of the size of the field of rationality. For example, the size of $G^F$ is approximately $q_F^{\dim G}$.
\end{rem}

\subsection{Sufficiently General Subgroups}\lbl{defn.suff.gen}

We recall the definition in \cite{LP} of a sufficiently general subgroup of a simple algebraic group.
Let $G$ be a simple, connected and adjoint algebraic group. For every finite dimensional, rational representation
\[
\rho : G \to \GL_d(\overline{\bF _p}),
\]
and for every finite subgroup $\Ga\subset G$, we say that $\Ga$ is {\em sufficiently general with respect to $\rho$}, if every linear subspace of $\overline{\bF_p}^d$ which is $\rho(\Ga)$-invariant, is also $\rho(G)$-invariant.

In \cite{LP}, the authors construct a certain finite dimensional, rational representation
\[
\rho _G : G \to \GL_d(\overline{\bF _p})
\]
whose dimension $d$ depends only on the Lie type of $G$ (and not on the characteristic), such that the following theorem is true:

\begin{thm} \lbl{thm:LP} \cite[Theorem 0.6]{LP}. Let $G$ be a simple, connected and adjoint algebraic group over a field of positive characteristic. If $\Ga \subset G$ is a finite subgroup which is sufficiently general with respect to $\rho_G$, then there is a Frobenius map $F:G\to G$ such that $(G^F)^{\der}$ is a simple group, and
\[
(G^F)^{\der} \subset \Ga \subset G^F .
\]
\end{thm}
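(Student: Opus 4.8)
The plan is to obtain the statement by feeding $\Ga$, through the representation $\rho_G$, into the Larsen--Pink structure theory for finite subgroups of $\GL_d(\overline{\bF_p})$ (so $d = \dim \rho_G$ depends only on the Lie type of $G$), then algebraizing the resulting Lie-type layer and finishing with a normalizer computation in $G$; throughout I use $p \neq 2,3$, the remaining characteristics being deferred to the appendix. \emph{Step 1 (isolate a Lie-type layer).} Viewing $\Ga$ via $\rho_G$ as a finite subgroup of $\GL_d(\overline{\bF_p})$, the Larsen--Pink structure theorem gives normal subgroups $\Ga_1 \trianglelefteq \Ga_2 \trianglelefteq \Ga$ with $\Ga_1$ a $p$-group, $\Ga_2/\Ga_1$ a direct product $L_1 \times \cdots \times L_m$ of finite simple groups of Lie type in characteristic $p$, and $[\Ga:\Ga_2]$ bounded in terms of $d$ alone. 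If $\Ga_1 \neq 1$, the Borel--Tits theorem places $\Ga_1$ in the unipotent radical $R_u(P)$ of a proper parabolic $P \subsetneq G$ that is canonically determined by $\Ga_1$; since $\Ga$ normalizes $\Ga_1$ it normalizes $P$, so $\Ga \subseteq P$, and the space of $R_u(P)$-fixed vectors in $\overline{\bF_p}^d$ is then $\rho_G(\Ga)$-invariant but not $\rho_G(G)$-invariant, contradicting sufficient generality. Hence $\Ga_1 = 1$, $\Ga_2 = L_1 \times \cdots \times L_m$, and $\Ga$ permutes the factors $L_i$ by conjugation.

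\emph{Step 2 (algebraize the layer; force its envelope onto $G$).} By the theory of finite groups of Lie type inside algebraic groups (Seitz, Liebeck--Seitz, and \cite{LP}; this is where $p \neq 2,3$ enters), each embedding $L_i \hookrightarrow G$ comes from an embedding of algebraic groups: there is a connected semisimple subgroup $M_i \subseteq G$ of the same type as $L_i$, uniquely determined by $L_i$, and a Frobenius map realizing $L_i$ with $(M_i^{F})^{\der} \subseteq L_i \subseteq M_i^{F}$. By uniqueness the $M_i$ pairwise commute (as do the $L_i$), so $M := M_1 \cdots M_m$ is a connected semisimple subgroup of $G$ normalized by $\Ga$. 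The representation $\rho_G$ is constructed in \cite{LP} so that sufficient generality with respect to $\rho_G$ forbids $\Ga \subseteq N_G(M)$ for any proper connected semisimple $M \subsetneq G$: each such $N_G(M)$ is a proper closed subgroup of the simple group $G$, and $\rho_G$ carries an $N_G(M)$-invariant subspace that is not $\rho_G(G)$-invariant, for every $M$ up to conjugacy. Hence $M = G$; since $G$ is simple and the $M_i$ commute, $m = 1$ and $M_1 = G$, so $L_1$ has the Lie type of $G$. Writing $F$ for the resulting Frobenius map of $G$, we get $(G^F)^{\der} \subseteq L_1 \subseteq G^F$, and since $L_1$ is a nonabelian simple group normalizing the nontrivial normal subgroup $(G^F)^{\der}$, in fact $(G^F)^{\der} = L_1 = \Ga_2$. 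In particular $(G^F)^{\der} \subseteq \Ga$, one of the required inclusions, and sufficient generality --- which forces $L_1$ to be genuinely simple --- disposes of the finitely many small-parameter cases, so $(G^F)^{\der}$ is simple.

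\emph{Step 3 (the normalizer inclusion), and the main obstacle.} For $\Ga \subseteq G^F$, note that $\Ga$ normalizes $\Ga_2 = (G^F)^{\der}$, so $\Ga \subseteq N_G\!\left((G^F)^{\der}\right)$, and it remains to know $N_G((G^F)^{\der}) = G^F$ --- a standard fact, also in \cite{LP}, for $G$ connected adjoint simple outside the finite set of small $q$ already excluded: conjugation by $g \in N_G((G^F)^{\der})$ induces an automorphism of $(G^F)^{\der}$ that, being algebraic, is inner-by-diagonal, so $g$ agrees up to $C_G((G^F)^{\der}) = Z(G) = 1$ with an element of $G^F$ (adjointness is essential here). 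This gives $\Ga \subseteq G^F$ and finishes the proof. The decisive obstacle is Step 1: the Larsen--Pink structure theorem for \emph{arbitrary} finite subgroups of $\GL_d(\overline{\bF_p})$ --- bounding the $p$-radical and identifying the remainder, up to index bounded uniformly in $d$, as a product of groups of Lie type, with no appeal to the classification --- is the whole substance of \cite{LP}. The secondary difficulties are the algebraization of Step 2 (existence and uniqueness of the commuting envelopes $M_i$ and the sandwiching $(M_i^{F})^{\der} \subseteq L_i \subseteq M_i^{F}$, which is delicate for small defining fields and is the reason $p = 2, 3$ are handled separately) and the verification that one representation $\rho_G$, of dimension depending only on the Lie type, simultaneously detects every proper parabolic and reductive subgroup that must be excluded --- both also carried out in \cite{LP}.
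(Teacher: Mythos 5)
The paper offers no proof of this statement: it is imported verbatim as Theorem 0.6 of \cite{LP}, and the only ``proof material'' present is the sketch in \S 2.3 of how the Frobenius map is actually built there --- intrinsically from $\Ga$, via a minimal unipotent $v\in\Ga$, the one-parameter subgroup $V$ containing it, the field $\bF_\Ga\subset\End(V)$ generated by the image of $\Ga\cap N(V)$, and a module $M$ with $\sigma\circ F_\Ga=\Frob_\Ga\circ\sigma$. Your route is genuinely different from that one, and it has two real problems. First, it is circular relative to \cite{LP}: the structure theorem you invoke in Step 1 (on arbitrary finite subgroups of $\GL_d(\overline{\bF_p})$, with bounded $p$-radical and a Lie-type layer) is itself \emph{deduced} in \cite{LP} from the sufficiently-general case you are trying to prove; the logical order there is the reverse of yours. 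Second, Step 2 --- promoting each finite simple factor $L_i$ to a connected semisimple envelope $M_i$ with $(M_i^{F})^{\der}\subseteq L_i\subseteq M_i^{F}$, uniquely and compatibly --- is essentially the entire content of the theorem, and the algebraization results you cite for it (Seitz, Liebeck--Seitz) rest on the classification of finite simple groups, which both \cite{LP} and this paper are explicitly designed to avoid. Your Steps 1 and 3 are fine in outline (the Borel--Tits argument killing the $p$-radical, and $N_G((G^F)^{\der})=G^F$ for $G$ adjoint), but they are not where the difficulty lies.

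A further practical point: the rest of the paper does not merely need the existence of \emph{some} $F$ with $(G^F)^{\der}\subseteq\Ga\subseteq G^F$. Proposition \ref{prop:pair.suff.gen}, Proposition \ref{prop:size.centralizer} and Lemma \ref{lem:enlarge.defn.field.} all use the specific construction of $\bF_\Ga$ inside $\End(V)$ (monotonicity of $\De\mapsto\bF_\De$, and recovery of $|\bF_\De|$ from the centralizer of a regular semisimple element). So even granting your Steps 1--3, the downstream arguments would still require the $\bF_\Ga$ construction of \cite{LP}. The statement should simply be treated as a black box from \cite{LP}, as the paper does.
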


\begin{rem} \lbl{rem:schur.mult} It is a well known fact (and it follows from \cite[Theorems 9.4.10 and 14.3.1]{Ca1}, together with Remark \ref{rem:size.field}) that the index $[G^F:(G^F)^{\der}]$ is less than or equal to twice the rank of $G$.
\end{rem}

From this point on and until Remark \ref{rem:new.suff.gen}, we shall say that a finite subgroup is sufficiently general (without giving a representation) if it is sufficiently general with respect to the representation $\rho_G$.

\subsection{The Field $\bF _\Ga$} \lbl{subsec:F.Ga}
We describe here in more detail the construction of the Frobenius map in Theorem \ref{thm:LP}. In this section we shall assume that $p$ is not equal to 2 or 3.

\begin{defn} Let $G$ be a simple algebraic group. A {\em minimal unipotent} is a non-identity element in the center of the unipotent radical of a Borel subgroup.
\end{defn}

Let $G$ be a simple, connected and adjoint algebraic group. Let $\De \subset G$ be a sufficiently general subgroup. Corollary 8.11 of \cite{LP} implies that there is a minimal unipotent $v\in \De$. Let $V\subset G$ be the one parameter subgroup that contains $v$. The ring $\End (V)$ is isomorphic to the field $\overline{\bF_p}$. Consider $N_G(V)$, the set of elements $g\in G$ that normalize $V$. We have an obvious map $N(V)\to \End (V)$.

\begin{defn} \lbl{defn:F.De} $\bF _\De$ is the subring of $\End (V)$ generated by the image of $\De \cap N(V)$. Denote the size of $\bF_\De$ by $q_\De$.
\end{defn}

A-priori, the ring $\bF_\De$ and the integer $q_\De$ depend on the choice of $v$. However, as follows from the relation (\ref{eq:double.containment}) below, they are determined by $\De$ alone.

As a finite subring of a field, $\bF _\De$ is itself a field. Moreover, $\De \cap V$ is a one dimensional vector space over $\bF _\De$.

\begin{thm} \cite[Theorem 9.1]{LP}. Let $G$ be a simple, connected and adjoint algebraic group. Let $\De$ be a sufficiently general subgroup, and fix a minimal unipotent $v\in \De$. Then there is an $\overline{\bF_p}$-vector space $M$, an element $m_0\in M$, and a representation $\sigma :G\to \Aut (M)$ such that for every subgroup $\De \subset \Ga \subset G$, the $\bF _\Ga$ submodule
\[
M_0 ^\Ga =\bF _\Ga -span \{\sigma (\Ga )m_0 \}
\]
satisfies that the natural map
\[
M_0 \otimes _{\bF _\Ga} \overline{\bF_p} \to M
\]
is an isomorphism.
\end{thm}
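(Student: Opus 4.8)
The statement to prove is the theorem attributed to \cite[Theorem 9.1]{LP}, which produces a uniform "integral structure" $M_0^\Ga$ on a representation $M$ of $G$, defined over $\bF_\Ga$, for every subgroup $\De \subset \Ga \subset G$, where $\De$ is sufficiently general with a fixed minimal unipotent $v$. The plan is as follows.

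\medskip

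\textbf{Setup and choice of $M$.} First I would fix the minimal unipotent $v \in \De$ and the one–parameter subgroup $V \subset G$ containing it, so that $\End(V) \cong \overline{\bF_p}$ and $\bF_\De \subset \bF_\Ga$ are subfields as constructed in Definition \ref{defn:F.De}. The representation $\sigma : G \to \Aut(M)$ should be chosen so that the stabilizer behaviour of a suitable vector $m_0$ "sees" the field generated by the unipotent root subgroups; a natural candidate is a faithful representation in which $v$ acts as a single Jordan block of size $2$ on the plane spanned by $m_0$ and $\sigma(v)m_0 - m_0$, so that the $\overline{\bF_p}$–line through $\sigma(v^t)m_0 - m_0$ records the parameter $t \in \End(V)$. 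More precisely, one wants $M$ so that the span of $\{\sigma(u)m_0 : u \in V\}$ together with the structure of $V$–orbits pins down exactly $\bF_\Ga$ once $\Ga$ is inserted between $\De$ and $G$. Concretely $M$ can be taken as (a Frobenius–twist–free piece of) the adjoint representation $\fg$ of $G$, with $m_0$ a generic vector, since the adjoint representation is defined over the prime field and its restriction to $V$ has a well–understood weight decomposition.

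\medskip

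\textbf{The two containments.} The heart of the argument is a double containment giving
\begin{equation}\lbl{eq:double.containment}
(G^F)^{\der} \subset \Ga \subset G^F
\end{equation}
for a Frobenius map $F$ with $q_F = q_\Ga$ — this is exactly Theorem \ref{thm:LP} applied to $\Ga$ (note $\Ga \supset \De$ is itself sufficiently general since sufficient generality only gets easier for larger groups). The point is then to identify the field $\bF_\Ga$ appearing in the statement with the field of rationality $\bF_{q_F}$ of this $F$; this is where the assumption $p \neq 2,3$ enters, ruling out the small–characteristic exceptions where minimal unipotents behave pathologically. Once that identification is made, $G^F = G(\bF_{q_\Ga})$ (in the untwisted case) or its twisted analogue, and the $\overline{\bF_p}$–vector space $M$ has the canonical $\bF_{q_\Ga}$–form $M(\bF_{q_\Ga})$ coming from the defining $\bF_q$–structure of $G$, base–changed along $\bF_q \hookrightarrow \bF_{q_\Ga}$. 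The claim to verify is then that $M_0^\Ga := \bF_\Ga\text{-span}\{\sigma(\Ga)m_0\}$ coincides with $M(\bF_{q_\Ga})$, equivalently that $M_0^\Ga \otimes_{\bF_\Ga} \overline{\bF_p} \to M$ is an isomorphism.

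\medskip

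\textbf{Surjectivity and injectivity of the comparison map.} For surjectivity: $M_0^\Ga$ is a $G^F$–subrepresentation–over–$\bF_\Ga$ of $M$ after base change — more precisely, since $(G^F)^{\der} \subset \Ga$, the $\overline{\bF_p}$–span of $\sigma(\Ga)m_0$ is a nonzero $(G^F)^{\der}$–invariant, hence (as $\Ga$, and therefore $(G^F)^{\der}$, is sufficiently general with respect to $\rho_G$, and one needs the analogous irreducibility statement for $\sigma$) a $G$–invariant subspace of $M$; choosing $M$ irreducible and $m_0$ with $\sigma(G)m_0$ spanning $M$, this forces $M_0^\Ga \otimes \overline{\bF_p}$ to surject onto $M$. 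For injectivity, i.e. that $\dim_{\bF_\Ga} M_0^\Ga \le \dim_{\overline{\bF_p}} M$: this follows because $M_0^\Ga$ lives inside $M(\bF_{q_\Ga})$, which has the right dimension — and that inclusion holds since $m_0 \in M(\bF_q) \subset M(\bF_{q_\Ga})$ and $\Ga \subset G^F = G(\bF_{q_\Ga})$ preserves $M(\bF_{q_\Ga})$, while $\bF_\Ga = \bF_{q_\Ga}$ is the coefficient field. Combining, the map is an isomorphism.

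\medskip

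\textbf{Main obstacle.} The delicate point — and the reason this is a substantial theorem in \cite{LP} rather than a formality — is the identification $\bF_\Ga = \bF_{q_F}$, i.e. showing that the \emph{intrinsically defined} field $\bF_\Ga$ (generated inside $\End(V)$ by $\Ga \cap N(V)$) equals the field of rationality of the Frobenius map $F$ furnished by Theorem \ref{thm:LP}. One inclusion ($\bF_\Ga \subset \bF_{q_F}$, since $\Ga \subset G^F$ and $G^F$ normalizes $V$ with parameters in $\bF_{q_F}$) is easy; the reverse requires knowing that $\De \cap N(V)$ — which sits inside $\Ga \cap N(V)$ — already generates the full field $\bF_{q_F}$, and this rests on the structural input that $(G^F)^{\der} \subset \De$ together with the fact that $(G^F)^{\der}$ acts on $V \cap G^F$ by all scalars in $\bF_{q_F}^\times$ (via a maximal torus of $(G^F)^{\der}$ normalizing $V$), which is precisely where one must exclude $p = 2, 3$ and invoke \cite[Corollary 8.11]{LP} to guarantee $v$ is a genuine minimal unipotent contained in $\De$. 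Managing this — and the parallel bookkeeping in the twisted case, where $\End(V)$ and $q_F$ interact with the graph automorphism — is the crux, and is exactly what \cite{LP} carries out; here I would cite it and record the consequence that $\bF_\Ga$ and $q_\Ga$ depend only on $\Ga$ (not on the choice of $v$), which is what relation \eqref{eq:double.containment} gives.
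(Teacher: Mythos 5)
The paper does not prove this statement; it is imported verbatim from Larsen and Pink as \cite[Theorem~9.1]{LP}, so there is no in-paper proof to compare against. What one can evaluate is whether your sketch is a plausible reconstruction of the argument in \cite{LP}, and there it runs into a circularity problem. You take the double containment $(G^F)^{\der}\subset\Ga\subset G^F$ (Theorem~\ref{thm:LP}, i.e.\ \cite[Theorem~0.6]{LP}) as input, together with the identification $\bF_\Ga=\bF_{q_F}$, and use these to produce the $\bF_\Ga$-form $M_0^\Ga$ of $M$. But the logical order in \cite{LP} --- and it is spelled out explicitly in the paragraph following the theorem in this paper --- is the reverse: Theorem~9.1 first produces $M,\sigma,m_0$ and hence an $\bF_\Ga$-rational structure on $M$; one then uses that structure (via \cite[Lemma~9.4]{LP}) to descend the Frobenius $\Frob_\Ga$ on $\Aut(M)$ to a Frobenius map $F_\Ga$ on $G$; and only then (via the proof of \cite[Theorem~0.5]{LP}) does one obtain $(G^{F_\Ga})^{\der}\subset\Ga\subset G^{F_\Ga}$. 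Deducing Theorem~9.1 from the double containment therefore inverts the implication, and the step you explicitly defer --- ``the identification $\bF_\Ga=\bF_{q_F}$ \ldots\ is exactly what \cite{LP} carries out'' --- is precisely the content that Theorem~9.1 and the adjacent lemmas exist to provide; it cannot be assumed as input.

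There are two smaller mismatches as well. Theorem~9.1 quantifies over \emph{all} intermediate subgroups $\De\subset\Ga\subset G$, including infinite ones such as $\Ga=G$, whereas Theorem~\ref{thm:LP} applies only to finite sufficiently general $\Ga$, so your route would not cover the full range of the statement. And the choice of $M$ as a piece of the adjoint representation $\fg$ is speculative: the module constructed in \cite[\S 9]{LP} is engineered around the one-parameter subgroup $V$ through the fixed minimal unipotent $v$, specifically so that the $\bF_\Ga$-span of $\sigma(\Ga)m_0$ recovers the $\bF_\Ga$-rational structure; that this works for $\fg$ with a ``generic'' $m_0$ would itself require an argument, not merely an assertion of irreducibility.
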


Given $M,\sigma,\De$ as above, for every subgroup $\De \subset \Ga
\subset G$ we have the standard Frobenius map $\Frob_\Ga :\Aut(M)\to
\Aut(M)$ relative to the field $\bF _\Ga$. By the proof of Lemma 9.4
in \cite{LP}, there is a Frobenius map $F_\Ga :G\to G$ such that
$\sigma \circ F_\Ga = \Frob_\Ga \circ \sigma$. As shown in the proof
of Theorem 0.5 of \cite{LP}, such $F_\Ga$ satisfies
\begin{equation} \lbl{eq:double.containment}
(G^{F_\Ga})^{\der} \subset \Ga \subset G^{F_\Ga} .
\end{equation}

\subsection{Properties of Sufficiently General Subgroups}

\begin{prop} \lbl{prop:pair.suff.gen} Let $G$ be a simple, connected and adjoint algebraic group.
\begin{enumerate}
\item If $\De \subset \De' \subset G$ are sufficiently general subgroups of $G$, then there is a Frobenius map $F:G\to G$ and an integer $m$ such that
\[
(G^F)^{\der} \subset \De \subset G^F
\]
and
\[
(G^{F^m})^{\der} \subset \De' \subset G^{F^m} .
\]
\item If $\De \subset G$ is sufficiently general, $\Ga _1,\Ga _2$ are finite subgroups of $G$ that contain $\De$, and $\bF_{\Ga _1}=\bF_{\Ga _2}$, then there is a Frobenius map $F:G\to G$ such that
\[
(G^F)^{\der} \subset \Ga _1,\Ga_2 \subset G^F .
\]
\end{enumerate}
\end{prop}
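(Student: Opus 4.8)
The plan is to reduce both parts of Proposition~\ref{prop:pair.suff.gen} to the machinery developed in Subsection~\ref{subsec:F.Ga}, in particular to the construction of the module $M$, the element $m_0$, and the representation $\sigma$ attached to a sufficiently general subgroup together with a fixed minimal unipotent $v$. The key observation is that the relation (\ref{eq:double.containment}) is not just an abstract double containment: the Frobenius map $F_\Ga$ it produces is manufactured from the field $\bF_\Ga$, so two groups with the same associated field produce \emph{the same} Frobenius map, and groups with nested associated fields produce Frobenius maps that are powers of one another. This is exactly what the two parts of the proposition assert, so the content is to check that the construction can be carried out \emph{coherently} across all the groups in play, using a single minimal unipotent $v$.

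For part (1): since $\De \subset \De'$ and $\De$ is sufficiently general, Corollary~8.11 of \cite{LP} gives a minimal unipotent $v \in \De$; this $v$ also lies in $\De'$, and it is still a minimal unipotent there, so I would fix this \emph{same} $v$ for both groups. Applying the theorem of \cite[Theorem~9.1]{LP} with the sufficiently general subgroup $\De$ and the minimal unipotent $v$ yields $M, m_0, \sigma$ that work simultaneously for every intermediate group $\De \subset \Ga \subset G$ — in particular for $\Ga = \De$ and $\Ga = \De'$. Now $\De \cap N(V)$ generates the field $\bF_\De$ inside $\End(V) \cong \overline{\bF_p}$, and $\De' \cap N(V) \supseteq \De \cap N(V)$ generates $\bF_{\De'} \supseteq \bF_\De$; since both are finite subfields of $\overline{\bF_p}$ of the same characteristic, $\bF_\De \subseteq \bF_{\De'}$ forces $\bF_{\De'} = \bF_{p^{ea}}$ and $\bF_{\De} = \bF_{p^e}$ with $a = [\bF_{\De'}:\bF_\De]$. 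The standard Frobenius maps then satisfy $\Frob_{\De'} = (\Frob_{\De})^{m}$ on $\Aut(M)$ for the appropriate $m$ (with $m$ essentially $\log_p q_\De / \log_p$ of the base, i.e. the ratio matching the field extension degree), and since $\sigma \circ F_\Ga = \Frob_\Ga \circ \sigma$ with $\sigma$ faithful, the corresponding Frobenius maps on $G$ satisfy $F_{\De'} = (F_{\De})^{m}$. Setting $F = F_{\De}$, the double containments $(G^F)^{\der} \subset \De \subset G^F$ and $(G^{F^m})^{\der} \subset \De' \subset G^{F^m}$ are precisely (\ref{eq:double.containment}) applied to $\De$ and $\De'$.

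For part (2): apply Corollary~8.11 of \cite{LP} to $\De$ to get a minimal unipotent $v \in \De \subset \Ga_1 \cap \Ga_2$, fix it once and for all, and run \cite[Theorem~9.1]{LP} with $(\De, v)$ to obtain $M, m_0, \sigma$ valid for all intermediate groups. By hypothesis $\bF_{\Ga_1} = \bF_{\Ga_2}$, hence $\Frob_{\Ga_1} = \Frob_{\Ga_2}$ on $\Aut(M)$, and — since $\sigma$ intertwines the Frobenius maps on $G$ with those on $\Aut(M)$ and is faithful — the Frobenius maps on $G$ agree: $F_{\Ga_1} = F_{\Ga_2} =: F$. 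Then (\ref{eq:double.containment}) applied to $\Ga_1$ and to $\Ga_2$ gives $(G^F)^{\der} \subset \Ga_i \subset G^F$ for $i = 1, 2$, as required. I should note that the hypothesis ``$\bF_{\Ga_1} = \bF_{\Ga_2}$'' presupposes these fields are defined, which requires $\Ga_1, \Ga_2$ themselves to be sufficiently general (or at least to contain the sufficiently general $\De$, so that $\bF_{\Ga_i}$ makes sense via $\Ga_i \cap N(V)$ and a minimal unipotent of $\De$); the construction of $\bF_\Ga$ in Definition~\ref{defn:F.De} applies verbatim to any group squeezed between $\De$ and $G$.

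The main obstacle, and the point that needs genuine care rather than bookkeeping, is the coherence of the data $(M, m_0, \sigma)$ across the different groups: one must use \emph{the same} minimal unipotent $v$ for $\De$ and $\De'$ (resp.\ for $\De$, $\Ga_1$, $\Ga_2$), so that the single invocation of \cite[Theorem~9.1]{LP} with input $(\De, v)$ supplies a module structure compatible with all of them, and then verify that passing from the Frobenius on $\Aut(M)$ back to the Frobenius on $G$ via the faithful intertwiner $\sigma$ genuinely transports the relation $\Frob_{\Ga'} = \Frob_\Ga^{m}$ (resp.\ the equality of Frobenius maps) without ambiguity — this is where one leans on the faithfulness of $\sigma$ and on the uniqueness clause in the definition of a Frobenius map that two Frobenius maps on $G$ inducing the same map under a faithful representation must coincide. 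Once that is pinned down, both statements follow immediately from (\ref{eq:double.containment}).
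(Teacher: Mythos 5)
Your proposal is correct and follows essentially the same route as the paper: part (1) rests on the observation that $\bF_\De \subseteq \bF_{\De'}$ (via a common minimal unipotent), so $\Frob_{\De'}$ is a power of $\Frob_\De$ and $F_{\De'}$ can be taken to be the corresponding power of $F_\De$; part (2) then follows because equality of the fields forces equality of the Frobenius maps. The paper's proof is just a terser version of yours (it deduces (2) formally from (1) by noting $n_1=n_2$, and defines $F_{\De'}:=F_\De^m$ directly rather than invoking faithfulness of $\sigma$), but the underlying mechanism is identical.
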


\begin{proof}
\begin{enumerate}
\item From Definition \ref{defn:F.De}, it is clear that if $\De \subset \De' \subset G$ are sufficiently general, then $\bF _{\De} \subset \bF _{\De'}$. Therefore $\Frob_{\De'}$ is a power of $\Frob_{\De}$, and we can take $F_{\De'}$ to be the same power of $F_{\De}$.

\item By $(1)$, there is a Frobenius map $F:G\to G$ and integers $n_1,n_2$ such that
\[
(G^{F^{n_1}})^{\der} \subset \Ga _1 \subset G^{F^{n_1}} \quad \textrm{and} \quad (G^{F^{n_2}})^{\der} \subset \Ga _2 \subset G^{F^{n_2}} .
\]
Since $\bF_{\Ga_1}=\bF_{\Ga_2}$, it follows that $n_1=n_2$.
\end{enumerate}
\end{proof}

\begin{prop} \lbl{prop:big.is.general} \cite[Proposition 3.5]{LP}. Let $G$ be a simple, connected and adjoint algebraic group. There is a constant $q_0$ that depends only on the Lie rank of $G$ such that for every Frobenius map $F:G\to G$, the condition $q_F > q_0$ implies that the subgroup $(G^F)^{\der}$ is sufficiently general.
\end{prop}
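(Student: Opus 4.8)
\emph{Proof plan.} Set $H=(G^F)^{\der}$, let $V=\overline{\bF_p}^d$ be the space on which $G$ acts via $\rho_G$, and for a subset $S\subseteq\GL_d(\overline{\bF_p})$ write $A(S)\subseteq\End(V)$ for its $\overline{\bF_p}$-linear span. The first step is the elementary observation that a subspace $W\subseteq V$ is invariant under a subgroup $\Ga\subseteq G$ acting through $\rho_G$ precisely when it is invariant under every element of $A(\rho_G(\Ga))$; consequently, to prove that $H$ is sufficiently general with respect to $\rho_G$ it suffices to prove the equality $A(\rho_G(H))=A(\rho_G(G))$. One inclusion holds trivially because $H\subseteq G$, so everything comes down to showing $A(\rho_G(G))\subseteq A(\rho_G(H))$ once $q_F$ is larger than a constant depending only on the Lie rank $r$.

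The plan is to make both of these spans completely explicit along root subgroups. On the group side, fix a maximal torus $T$ and a Borel; the big cell yields a dominant morphism $\prod_{\al\in\Phi^-}U_\al\times T\times\prod_{\al\in\Phi^+}U_\al\to G$, and after rewriting each torus coordinate as a bounded-length product of root-subgroup elements --- using $h_\al(s)=n_\al(s)n_\al(1)^{-1}$ with $n_\al(s)=u_\al(s)u_{-\al}(-s^{-1})u_\al(s)$ --- one gets a dominant morphism $\psi\colon X\to G$, where $X$ is a product of $N(r)$ copies of $\bA^1$ and $\bG_m$ with $N(r)$ depending only on $r$. Since $\psi$ is dominant, the matrix entries of $\rho_G\circ\psi$ are (Laurent-)polynomial functions on $X$ whose coefficients, viewed as elements of $\End(V)$, span $A(\rho_G(G))$. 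On the subgroup side, Steinberg's theorem says that for $q_F$ larger than some $q_1=q_1(r)$ the group $H$ is generated by its finite root subgroups $U_\al(\bF_{q_F})$ (this is where the finitely many small cases are discarded), so $\psi$ carries the finite set $X(\bF_{q_F})$ into $H$, and hence $A(\rho_G(H))$ contains the span of $\{\rho_G(\psi(a)):a\in X(\bF_{q_F})\}$. In the twisted cases one argues in the same way, using the $F$-stable twisted root subgroups and their natural parametrizations in place of the ordinary $U_\al$.

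It remains to compare, inside $\End(V)$, the span of the values $\rho_G(\psi(a))$ for $a$ running over the finite set $X(\bF_{q_F})$ with the span of all coefficients of the polynomial map $\rho_G\circ\psi$. By Vandermonde/Lagrange interpolation, applied one variable at a time over $\bF_{q_F}$ and, for the multiplicative variables, over $\bF_{q_F}^\times$, these two spans coincide as soon as $q_F$ exceeds the degree of $\rho_G\circ\psi$ in each variable. The quantitative input one needs here is that the matrix entries of $\rho_G(u_\al(t))$ are polynomials in $t$ of degree at most a constant $D_0(r)$, \emph{uniformly in the characteristic} --- equivalently, that the weights occurring in $\rho_G$ are bounded in terms of $r$ alone. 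Granting this, restricting $\rho_G$ to the $\SL_2$ attached to $\al$ and invoking the Steinberg tensor product theorem identifies the degree of $t\mapsto\rho_G(u_\al(t))$ on an irreducible constituent with the associated $\SL_2$-weight, so it stays bounded even when $p$ is small; as $\psi$ is built from $N(r)$ such factors one gets $\deg(\rho_G\circ\psi)\le D(r)$ for some $D(r)$. Taking $q_0=\max\{q_1(r),\,D(r)+1\}$ then gives $A(\rho_G(G))\subseteq A(\rho_G(H))$ whenever $q_F>q_0$, completing the argument.

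I expect the real difficulty to be exactly this uniform-in-$p$ degree bound for $\rho_G$ along root subgroups: a representation of bounded dimension may still involve Frobenius twists, and a priori $t\mapsto\rho_G(u_\al(t))$ could have degree divisible by a large power of $p$, which would wreck the interpolation step and force $q_0$ to depend on $p$. Establishing that this does not happen is precisely what has to be read off from the concrete construction of $\rho_G$ in \cite{LP} (and is what makes it possible to take $q_0$ depending on $r$ only). The remaining ingredients --- Steinberg's generation theorem for $(G^F)^{\der}$ away from finitely many cases, the big-cell parametrization together with the expression of torus elements through root subgroups, and the invertibility of Vandermonde matrices over $\bF_{q_F}$ --- are entirely standard.
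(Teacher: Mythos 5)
The paper does not prove Proposition~\ref{prop:big.is.general}; it is quoted directly from \cite[Proposition 3.5]{LP} and used as a black box (and Remark~\ref{rem:new.suff.gen} likewise only asserts, without argument, that the same proof works for $\rho_G\oplus\rho_{\mathrm{reg}}$). There is therefore no in-paper proof against which to compare you, so I evaluate your plan on its own terms. The overall strategy --- pass to $\overline{\bF_p}$-spans in $\End(V)$, parametrize a dense subset of $G$ through a map $\psi$ built from root subgroups so that $\psi(X(\bF_{q_F}))\subseteq(G^F)^{\der}$, and compare spans over $\bF_{q_F}$ against spans over $\overline{\bF_p}$ by interpolation --- is coherent, and the reduction to $A(\rho_G(G))\subseteq A(\rho_G(H))$ is a valid sufficient condition. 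Rewriting the torus coordinates through $n_\al(s)$ so that the $\bF_{q_F}$-points of the parametrization land in the \emph{derived} subgroup, rather than merely in $G^F$, is the right precaution.

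The genuine gap is exactly the one you flagged, and it is not a removable technicality. The interpolation step needs the degree of $t\mapsto\rho_G(u_\al(t))$ to be bounded in terms of $r$ alone, independently of $p$, and this does \emph{not} follow from $\dim\rho_G$ being bounded: a representation of bounded dimension can still carry Frobenius twists, e.g.\ $L(p^m\om)$ has the same dimension as $L(\om)$ but degree $p^m\langle\om,\al^\vee\rangle$ along $u_\al$. Your appeal to the Steinberg tensor product theorem merely translates ``bounded degree'' into ``bounded $\SL_2$-weight''; it supplies neither bound. That bound is a structural feature of the particular $\rho_G$ Larsen and Pink construct (in effect, that the weights of $\rho_G$ are read off from the root datum and involve no Frobenius twist), and to close the gap you would have to quote or reprove that feature explicitly rather than ``grant'' it --- otherwise $q_0$ could silently pick up a dependence on $p$, defeating the statement. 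In the same vein, ``in the twisted cases one argues in the same way'' is too quick for the Suzuki and Ree families, where $q_F$ is not an integer and the relevant root subgroups are not parametrized by $\bF_{q_F}$-points; those families have rank at most $4$ and can be disposed of separately, but that has to be said.
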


\begin{prop} \lbl{prop:conj.classes} \cite[Theorem 6.6]{LP}. Let $G$ be a simple, connected and adjoint algebraic group, and let $\De \subset G$ be a sufficiently general subgroup. Then there is a constant $a_0$, that depends only on the Lie rank of $G$, such that for every $g \in \De$, the number of $\De$-conjugacy classes in $\De \cap g^G$ is less than $a_0$.
\end{prop}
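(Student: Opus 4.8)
The plan is to reduce the statement about $\Delta$-conjugacy classes inside a single $G$-conjugacy class to a statement about the group of components of a centralizer, and then to bound that component group uniformly in terms of the Lie rank. First I would fix $g \in \Delta$ and set $C = C_G(g)$, the centralizer in the algebraic group $G$. Since $\Delta$ is sufficiently general, Theorem \ref{thm:LP} (or the refined relation \eqref{eq:double.containment}) gives a Frobenius map $F : G \to G$ with $(G^F)^{\der} \subset \Delta \subset G^F$; in particular $g \in G^F$, so $F$ stabilizes $C$ and we may speak of $C^F$. The key observation is the standard Lang--Steinberg dictionary: the set of $G^F$-conjugacy classes contained in the $G$-conjugacy class $g^G \cap G^F$ is in bijection with the $F$-conjugacy classes of the component group $C/C^{\circ}$ (where $C^{\circ}$ is the identity component), and in particular is bounded by $|C/C^{\circ}|$. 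Passing from $G^F$-classes to $\Delta$-classes only increases the count by a factor of at most $[G^F : \Delta] \le [G^F : (G^F)^{\der}]$, which by Remark \ref{rem:schur.mult} is at most twice the rank of $G$.

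Next I would bound $|C_G(g)/C_G(g)^{\circ}|$ uniformly over all $g \in G$ by a constant depending only on the Lie rank $r$ of $G$. This is a purely algebraic-group fact: the component group of a centralizer of any element (semisimple, unipotent, or general, via Jordan decomposition) in a connected reductive group is bounded in terms of the root datum alone. For semisimple $s$ the component group $C_G(s)/C_G(s)^{\circ}$ embeds into a subquotient of the Weyl group, hence has order at most $|W| \le (r+1)!$ or so; for unipotent $u$ in characteristic not dividing the order of $W$ (which is why the hypothesis $p \ne 2,3$ in Subsection \ref{subsec:F.Ga} is convenient, though here one can cite the general theory) the component group of the unipotent centralizer is again bounded by a function of $r$; and for a general element $g = su$ with Jordan decomposition, $C_G(g) = C_{C_G(s)}(u)$, so one iterates inside the (possibly disconnected) reductive group $C_G(s)$, whose rank is still at most $r$. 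Combining, $|C_G(g)/C_G(g)^{\circ}| \le b_0(r)$ for an explicit $b_0$ depending only on $r$.

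Putting the two steps together, the number of $\Delta$-conjugacy classes in $\Delta \cap g^G$ is at most $[G^F : \Delta] \cdot |C/C^F| \le (2r) \cdot b_0(r) =: a_0$, a constant depending only on the Lie rank of $G$, which is what we wanted. I expect the main obstacle to be the clean bookkeeping in the first step: making precise the claim that $\Delta$-classes inside $\Delta \cap g^G$ are controlled by $G^F$-classes inside $G^F \cap g^G$, and that the latter are controlled by $F$-twisted classes in the component group, requires care because $\Delta$ sits strictly between $(G^F)^{\der}$ and $G^F$ and because $g^G \cap G^F$ may meet several $G^F$-classes even before intersecting with $\Delta$; the Lang--Steinberg argument handles each issue but the constants must be tracked so that nothing depends on $q$ or on $p$. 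The second step, by contrast, is standard structure theory of reductive groups and contributes only the "absolute" constant $b_0(r)$.
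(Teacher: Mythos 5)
The paper does not prove this proposition at all --- it is quoted verbatim as \cite[Theorem 6.6]{LP}, and the remaining facts about sufficiently general subgroups (Propositions \ref{prop:pair.suff.gen}, \ref{prop:big.is.general}, \ref{prop:size.centralizer}) are likewise imported from Larsen--Pink. So there is no in-paper argument to compare yours against; what you have written is a reconstruction from scratch, and on its own terms the Lang--Steinberg route you propose is the natural one and its skeleton is sound.

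A few things should be made explicit before it can be called a proof. First, a small slip: in your final estimate you write $|C/C^{F}|$ where you clearly mean the component group $|C/C^{\circ}|$. Second, and more substantively, the step ``passing from $G^F$-classes to $\Delta$-classes only increases the count by a factor $[G^F:\Delta]$'' silently uses that $\Delta$ is \emph{normal} in $G^F$. This is in fact true --- $G^F/(G^F)^{\der}$ is abelian, so every subgroup lying between $(G^F)^{\der}$ and $G^F$ is normal in $G^F$ --- but it has to be said, because for a non-normal subgroup $H\le K$ a single $K$-class need not split into at most $[K:H]$ $H$-classes. With normality, the $\Delta$-orbits on a fixed $G^F$-class $\mathcal O\ni y$ biject with $G^F/(\Delta\cdot C_{G^F}(y))$, giving the bound $[G^F:\Delta]\le 2\,\rk G$, which combines with the Lang--Steinberg count $|C/C^\circ|$ exactly as you say. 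Third, the uniform bound $|C_G(g)/C_G(g)^\circ|\le b_0(r)$ is standard but not one-line: for semisimple $s$, $C_G(s)/C_G(s)^\circ$ embeds into the (finite) kernel of the isogeny from the simply connected cover, hence has order bounded by a function of $r$; for unipotent elements the component groups are classified and bounded; for mixed $g=su$ one works inside $C_G(s)^\circ$ using that $u\in C_G(s)^\circ$ (which itself follows from Steinberg's connectedness theorem upstairs in the simply connected cover). In bad characteristic the unipotent component groups differ, so this last step needs its own case check --- consistent with the fact that the paper assumes $p\ne 2,3$ in the body and treats $p=2,3$ in an appendix. With those points spelled out, your argument would give the proposition with $a_0 = 2\,\rk(G)\cdot b_0(\rk G)$.
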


\subsection{Regular Semisimple Elements in Sufficiently General Groups}

\begin{lem} Let $G$ be a simple algebraic group. There is a rational representation $\rho _{reg}: G \to \GL_{d}(\overline{\bF_p})$, whose dimension $d$ depends only on the Lie rank of $G$, such that the following holds: If $\De \subset G$ is a subgroup, and every linear subspace of $\overline{\bF_p}^d$ that is $\rho_{reg}(\De)$-invariant is also $\rho_{reg}(G)$-invariant, then for every $g_0\in G$, the set $g_0\De$ contains a regular semisimple element.
\end{lem}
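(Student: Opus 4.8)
The plan is to build $\rho_{reg}$ as a direct sum of two pieces, so that the hypothesis on $\De$-invariant subspaces forces $\De$ to be sufficiently general with respect to $\rho_G$ and, simultaneously, controls the image of $\De$ well enough to locate regular semisimple elements in every coset $g_0\De$. For the first piece, I would simply take $\rho_G$ itself (or a representation containing it as a summand), so that the invariant-subspace hypothesis on $\rho_{reg}$ implies that $\De$ is sufficiently general in the sense of Subsection \ref{defn.suff.gen}. This lets me apply Theorem \ref{thm:LP}: there is a Frobenius map $F$ with $(G^F)^{\der}\subset\De\subset G^F$, so $\De$ is sandwiched between a quasi-simple group of Lie type and its normalizer-type overgroup $G^F$.

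The second ingredient is the observation that regular semisimple elements are generic: the non-regular-semisimple locus of $G$ is a proper closed subvariety $Z\subsetneq G$, cut out (for instance) by the vanishing of the discriminant of the characteristic polynomial in the adjoint representation, or equivalently by a nonzero regular function $f$ on $G$. For a fixed $g_0$, the coset $g_0\De$ avoids regular semisimple elements only if $g_0\De\subset Z$, i.e. $f$ vanishes on all of $g_0\De$. I want to rule this out using the ``sufficiently general'' hypothesis. The mechanism for converting ``$\De$ is not contained in a proper subvariety'' statements into representation-theoretic ones is exactly the content of the Larsen–Pink machinery: if $\De$ (hence $\langle g_0^{-1}, \De\rangle$, or an appropriate translate) fails to hit the dense open set of regular semisimple elements, then the functions vanishing on $g_0\De$ span a nonzero $G$-stable-up-to-translation subspace inside a suitable space of regular functions on $G$, and this subspace is $\rho(\De)$-invariant for the representation $\rho$ on (a finite-dimensional $G$-stable chunk of) $\cO(G)$ but not $\rho(G)$-invariant — contradicting the hypothesis once we include $\rho$ as the second summand of $\rho_{reg}$.

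Concretely, I would let $W\subset\cO(G)$ be the finite-dimensional $G\times G$-submodule generated (under left and right translation) by the discriminant function $f$; its dimension depends only on the Lie type of $G$, since $f$ has bounded degree in terms of the root datum. Set $\rho_{reg}=\rho_G\oplus(\text{the }G\text{-action on }W\text{ by right translation})$. Given $g_0$, suppose $g_0\De$ contains no regular semisimple element. Then every translate $h\cdot f$ for $h$ ranging over $g_0\De$ vanishes at the identity — more usefully, the subspace of $W$ of functions vanishing on the coset $\De$ (after absorbing $g_0$ into the function) is $\rho_{reg}(\De)$-invariant by construction and proper (since $f$ itself does not vanish on all of $G$, and $G$ acts with $Z$ proper, this subspace is not all of $W$ and is not $G$-invariant). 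Invoking the invariant-subspace hypothesis gives a contradiction.

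The step I expect to be the main obstacle is the bookkeeping in the previous paragraph: making precise which finite-dimensional $G$-stable space of functions to use, checking that ``$g_0\De$ misses the regular semisimple locus'' really does produce a subspace that is $\rho_{reg}(\De)$-invariant but not $\rho_{reg}(G)$-invariant, and handling the translation by $g_0$ correctly (one may need to enlarge $W$ to be stable under both sides, or to pass to $\langle \De, g_0^{-1}g g_0 : g\in\De\rangle$-type arguments). One clean way around the $g_0$-issue: note that if $g_0\De\subset Z$ then $\De\subset g_0^{-1}Z$, and the collection $\{g_0^{-1}Z : g_0\in G\}$ is a single $G$-orbit of translates of $Z$; the ideal of functions vanishing on all translates of $Z$ by a coset of $\De$ still lives in a bounded-dimensional $G$-module, so the same contradiction applies uniformly in $g_0$. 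I would also double-check that the dimension bound on $W$ (and hence on $d$) genuinely depends only on the Lie rank, not on $p$ — this follows because the discriminant of the adjoint characteristic polynomial is a universal expression in the structure constants, of degree bounded in terms of $\dim G$, which is bounded in terms of the rank.
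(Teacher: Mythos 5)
Your proposal follows the same underlying idea as the paper: realize $\rho_{reg}$ as right translation on a bounded-dimensional space $W$ of regular functions on $G$ that contains a nonzero polynomial $P$ whose nonvanishing forces regular semisimplicity, and exploit the invariant-subspace hypothesis. The paper's $W$ is simply $\overline{\bF_p}[G]^{\leq D}$, the image of the degree-$\leq D$ polynomials on $M_n$ under restriction to $G$ (with $D=\deg\widetilde{P}$ for a chosen lift of $P$), which is automatically stable under left and right translation, so the dimension bound in terms of $\rk G$ is immediate without building a $G\times G$-submodule by hand. Also, the $\rho_G$ summand and the appeal to Theorem \ref{thm:LP} in your first paragraph are a detour: nothing in the rest of your argument uses them, and the paper forms the direct sum $\rho_G\oplus\rho_{reg}$ only afterwards, in Remark \ref{rem:new.suff.gen}, for a different purpose.

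The substantive difference is that you argue by contradiction. You set $W_0=\{Q\in W : Q|_{g_0\De}=0\}$, show $W_0$ is $\rho_{reg}(\De)$-invariant and nonzero (it contains $P$, since $P$ is assumed to vanish on $g_0\De$), and then need to show $W_0$ is not $\rho_{reg}(G)$-invariant. You flag exactly this as the main obstacle, and your stated justification (``$f$ does not vanish on all of $G$\dots$Z$ proper\dots not all of $W$ and is not $G$-invariant'') does not close it: properness of $W_0$ inside $W$ is not what is needed, and $G$-non-invariance is never actually established. The missing ingredient --- which is also the key observation in the paper's direct proof --- is that any nonzero $\rho_{reg}(G)$-invariant subspace $V\subset W$ contains, for any prescribed $h\in G$, a function not vanishing at $h$: for $0\neq Q\in V$, since $G$ is irreducible there is $g$ with $Q(hg)\neq 0$, and then $\rho_{reg}(g)Q\in V$ has $(\rho_{reg}(g)Q)(h)=Q(hg)\neq 0$. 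Applying this with $h=g_0$ shows $W_0$ cannot be $G$-invariant, closing your gap. The paper avoids the contradiction altogether: take $W'=\mathrm{span}\,\rho_{reg}(\De)(P)$, which is $\De$-invariant, hence $G$-invariant by hypothesis, hence contains some $Q$ with $Q(g_0)\neq 0$; as $Q$ is a linear combination of the functions $x\mapsto P(x\de)$ for $\de\in\De$, some $P(g_0\de)\neq 0$, and $g_0\de\in g_0\De$ is the desired regular semisimple element.
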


\begin{proof} Fix an embedding of $G$ into matrices of size $n\times n$. This can be done such that $n$ depends only on the Lie rank of $G$. Using this embedding, we think of the coordinate ring of $G$ as a quotient of the coordinate ring of $M_n$. There is an element $P(x)$ in the coordinate ring of $G$ such that for all $g\in G$, the condition $P(g)\neq 0$ implies that $g$ is regular semisimple. Pick an element $\widetilde{P}(x)$ of the coordinate ring of $M_n$ that lie over $P$. We denote the degree of $\widetilde{P}(x)$ by $D$.

Let $\overline{\bF_p}[M_n]^{\leq D}$ be the linear space of polynomial functions in $\overline{\bF_p}[M_n]$ of degree less than or equal to $D$, and let $\overline{\bF_p}[G]^{\leq D}$ be the image of $\overline{\bF_p}[M_n]^{\leq D}$ in the coordinate ring of $G$. Let $\rho _{reg}$ be the representation of $G$ on $\overline{\bF_p}[G]^{\leq D}$, where $g$ acts on a polynomial $Q(x)$ by right translations:
\[
(\rho_{reg}(g)Q)(x)=Q(x\cdot g) .
\]
For every $g_0\in G$, and every non-trivial $Q(x)\in \overline{\bF _p}[G]^{\leq D}$, there is $g\in G$ such that
\[
(\rho_{reg}(g)Q)(g_0) = Q(g_0\cdot g) \neq 0.
\]
Therefore every non-trivial and $\rho_{reg}(G)$-invariant subspace of $\overline{\bF _p}[G]^{\leq D}$ contains a polynomial $Q(x)$ such that $Q(g_0)\neq 0$.

Assume that $g_0\in G$, that $\De \subset G$, and that every $\rho_{reg}(\De)$-invariant subspace is also $\rho_{reg}(G)$-invariant. Consider the subspace $W\subset \overline{\bF _p}[G]^{\leq D}$ spanned by the set $\rho_{reg}(\De)(P(x))$. It is clearly $\rho(\De)$-invariant, and so, by our assumption, it is $\rho(G)$-invariant. Therefore $W$ contains a polynomial $Q(x)$ such that $Q(g_0)\neq 0$, and hence there is a $\de \in \De$ such that $P(g_0 \cdot \de)\neq 0$. The element $g_0\cdot \de$ is regular semisimple.
\end{proof}

\begin{rem} \lbl{rem:new.suff.gen} Denote the direct sum of the representations $\rho _G$ and $\rho _{reg}$ by $\rho _{G}\oplus \rho_{reg}$. We slightly change the notations here, and declare a finite subgroup $\Ga \subset G$ to be sufficiently general if every $(\rho _{G}\oplus \rho_{reg})(\Ga)$-invariant subspace is also $(\rho _{G}\oplus \rho_{reg})(G)$-invariant. The proof of Proposition \ref{prop:big.is.general} holds also for the representation $\rho_G\oplus\rho_{reg}$ (maybe after changing the value of $q_0$). The rest of the claims quoted from \cite{LP} remain trivially true. In particular, in this new notation, we get the following
\end{rem}

\begin{prop} \lbl{prop:s.s.regular.element} Let $G$ be a simple, connected and adjoint algebraic group. Suppose that $\De \subset G$ is sufficiently general. Then for every $x\in G$, the set $x\De$ contains a regular semisimple element.
\end{prop}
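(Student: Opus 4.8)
The plan is to reduce the statement to the Lemma immediately preceding Remark~\ref{rem:new.suff.gen}, which already produces a regular semisimple element in any coset $g_0\De$ as soon as $\De$ satisfies the invariance condition with respect to the single representation $\rho_{reg}$. Thus the only work is to check that a subgroup $\De$ that is sufficiently general in the strengthened sense of Remark~\ref{rem:new.suff.gen} --- every $(\rho_G\oplus\rho_{reg})(\De)$-invariant subspace is $(\rho_G\oplus\rho_{reg})(G)$-invariant --- automatically has the property that every $\rho_{reg}(\De)$-invariant subspace is $\rho_{reg}(G)$-invariant.

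First I would fix notation: let $V_1$ and $V_2$ denote the representation spaces of $\rho_G$ and $\rho_{reg}$, so that $\rho_G\oplus\rho_{reg}$ acts on $V_1\oplus V_2$. Let $U\subset V_2$ be an arbitrary $\rho_{reg}(\De)$-invariant subspace. The key move is to ``pad'' $U$ by the whole first factor: set $W = V_1\oplus U \subset V_1\oplus V_2$. Since $V_1$ is $\rho_G(\De)$-invariant and $U$ is $\rho_{reg}(\De)$-invariant, the subspace $W$ is $(\rho_G\oplus\rho_{reg})(\De)$-invariant; by hypothesis it is therefore $(\rho_G\oplus\rho_{reg})(G)$-invariant. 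Now for any $g\in G$ and any $u\in U$ we have $(0,u)\in W$, hence $(0,\rho_{reg}(g)u) = (\rho_G\oplus\rho_{reg})(g)(0,u) \in W = V_1\oplus U$, which forces $\rho_{reg}(g)u\in U$. Thus $U$ is $\rho_{reg}(G)$-invariant, as required, and $\De$ meets the hypothesis of the Lemma. To finish, I would apply that Lemma with $g_0 = x$: it yields $\de\in\De$ with $x\de$ regular semisimple, so $x\De$ contains a regular semisimple element.

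I do not expect a genuine obstacle here. The only step requiring care is the elementary linear-algebra argument that invariance of the padded subspace $W=V_1\oplus U$ under the direct-sum representation of $G$ pushes back to invariance of $U$ under $\rho_{reg}(G)$ --- together with the (equally easy, but necessary) observation that $W$ is $\De$-invariant in the first place, so that the sufficiently-general hypothesis applies to it. It is also worth remembering that Remark~\ref{rem:new.suff.gen} records that Proposition~\ref{prop:big.is.general} survives the passage from $\rho_G$ to $\rho_G\oplus\rho_{reg}$ (possibly after enlarging $q_0$), so the strengthened notion of ``sufficiently general'' is non-vacuous; this, however, is not needed for the proof of the present proposition.
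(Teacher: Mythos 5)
Your proof is correct and follows the paper's intended route: Proposition~\ref{prop:s.s.regular.element} is stated there as an immediate consequence of Remark~\ref{rem:new.suff.gen} and the Lemma constructing $\rho_{reg}$, with the implication you verify left implicit. (Equally well one could take $W=\{0\}\oplus U$ in place of $W=V_1\oplus U$.)
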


\begin{defn} For a group $H$ and an element $h\in H$, we denote the centralizer of $h$ in $H$, i.e. the set of elements of $H$ that commute with $h$, by $C_H(h)$.
\end{defn}

\begin{prop} \lbl{prop:size.centralizer} \cite[Theorem 6.2]{LP}. Let $G$ be a simple, connected and adjoint algebraic group. There is a constant $C$ such that for any sufficiently general subgroup $\De \subset G$ and for any element $g\in \De$,
\[
\frac{1}{C}|\bF _\De| ^{\dim(C_G(g))} \leq |C_{\De}(g)| \leq C |\bF _\De| ^{\dim(C_G(g))}.
\]
\end{prop}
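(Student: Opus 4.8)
The plan is to reduce the statement, via the description of $\De$ from Subsection~\ref{subsec:F.Ga}, to a uniform point count for the $F$-fixed points of the algebraic centralizer $C_G(g)$. Since $\De$ is sufficiently general, the construction of Subsection~\ref{subsec:F.Ga} produces a Frobenius map $F=F_\De\colon G\to G$ with $q_F=|\bF_\De|$ which, by \eqref{eq:double.containment}, satisfies $(G^F)^{\der}\subseteq\De\subseteq G^F$; by Remark~\ref{rem:schur.mult}, $[G^F:(G^F)^{\der}]\le 2r$, where $r=\rk G$. Fix $g\in\De$. Then $g\in G^F$, so $F(g)=g$ and the algebraic subgroup $H:=C_G(g)$ is $F$-stable, hence so is its identity component $H^\circ$, with $\dim H^\circ=\dim H=\dim C_G(g)$. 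From $F$-stability of $H$ we get $C_\De(g)=\De\cap H\subseteq G^F\cap H=H^F$, while $(G^F)^{\der}\cap H\subseteq\De\cap H=C_\De(g)$, and the inclusion $H^F\hookrightarrow G^F$ gives an injection $H^F/\big((G^F)^{\der}\cap H\big)\hookrightarrow G^F/(G^F)^{\der}$, so $[H^F:(G^F)^{\der}\cap H]\le 2r$. Therefore
\[
\frac{1}{2r}\,|H^F|\ \le\ |C_\De(g)|\ \le\ |H^F| ,
\]
and since $|\bF_\De|=q_F$, it remains to bound $|H^F|$ between $c^{-1}q_F^{\dim H}$ and $c\,q_F^{\dim H}$ for a constant $c$ depending only on $r$.

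Write $q=q_F$. The bound on $|H^F|$ comes from two facts, each uniform in $g$ and in the characteristic because it depends only on the root datum of $G$, of which there are finitely many of rank $\le r$. First, the component group $H/H^\circ$ of a centralizer in the connected reductive group $G$ has order bounded by a constant $C_2(r)$: this follows from the Jordan decomposition $g=su$ together with the finiteness of the list of possible reductive subgroups $C_G(s)$ of rank $\le r$ and of their unipotent classes --- here we use that $p\neq 2,3$, so $p$ is good. Second, for any connected algebraic group $H^\circ$ of dimension $n\le\dim G$ over $\bF_q$ equipped with a Frobenius map of size $q$ one has $|(H^\circ)^F|=q^{n}\cdot\big(1+O(q^{-1})\big)$ with implied constant depending only on $n$: writing the Levi decomposition $H^\circ=R_u(H^\circ)\rtimes L$, one has $|R_u(H^\circ)^F|=q^{\dim R_u(H^\circ)}$ for the connected unipotent radical, and Steinberg's formula gives $|L^F|=q^{\dim L}\prod_{i=1}^{\ell}(1-q^{-d_i}\zeta_i)$ with $\zeta_i$ roots of unity, $d_i\ge 1$, and $\ell=\rk L\le r$, so each factor lies in $[1-q^{-1},1+q^{-1}]$ (and here $q\ge 5$, though $q\ge 2$ would suffice). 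Consequently $c_1^{-1}q^{\,n}\le|(H^\circ)^F|\le c_1 q^{\,n}$ for a constant $c_1=c_1(r)$. Combining $|(H^\circ)^F|\le|H^F|\le[H:H^\circ]\,|(H^\circ)^F|$ with these two facts gives $c_1^{-1}q^{\dim H}\le|H^F|\le C_2(r)\,c_1\,q^{\dim H}$, and the reduction above then yields the proposition with $C=2r\,C_2(r)\,c_1$.

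The main obstacle is obtaining constants that are uniform in all three parameters: the subgroup $\De$, the element $g$, and the characteristic. Independence of $\De$ is disposed of once and for all by the sandwich $(G^F)^{\der}\subseteq\De\subseteq G^F$, which turns the problem into one about the ambient group. The genuinely delicate point is uniformity in $g$ and in $p$: it rests on the fact that, as $g$ ranges over $G$ and $G$ over the simple adjoint groups of rank $\le r$ in any characteristic $\neq 2,3$, the connected centralizers $C_G(g)^\circ$ (together with the type of the restricted Frobenius) and the component groups $C_G(g)/C_G(g)^\circ$ fall into finitely many isomorphism types governed by the finitely many root data of rank $\le r$. This is exactly where the hypothesis $p\neq 2,3$ is used, ensuring the combinatorics of unipotent classes and component groups is the generic one and hence bounded purely in terms of $r$.
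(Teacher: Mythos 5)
The paper does not prove this proposition; it is a straight citation of~\cite[Theorem~6.2]{LP}, so there is no in-paper argument to compare against. Your sketch reconstructs a plausible proof and the high-level architecture is sound: the sandwich $(G^F)^{\der}\subseteq\De\subseteq G^F$ together with $q_F=|\bF_\De|$, the $F$-stability of $H=C_G(g)$, and the second-isomorphism-theorem index bound $[H^F:(G^F)^{\der}\cap H]\le 2r$ correctly reduce the problem to a uniform point count for $H^F$, and Steinberg's formula is the right tool for that count.

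Two spots are glossed over. First, a Levi decomposition $H^\circ=R_u(H^\circ)\rtimes L$ does not hold for arbitrary connected linear algebraic groups in positive characteristic; you need the specific fact that connected centralizers of elements in a reductive group admit one (valid in good characteristic), and that is worth saying. Second, ``$p\neq 2,3$, so $p$ is good'' is not quite accurate: $p=5$ is bad for $E_8$, so a bound on $|H/H^\circ|$ uniform in $p$ needs a different justification than ``the combinatorics of unipotent classes is the generic one.'' Both points are repairable, and indeed \cite[Theorem~6.2]{LP} imposes no restriction on the characteristic at all, so the cited result is stronger than what your sketch, as written, delivers. Since this paper assumes $p\neq 2,3$ from Subsection~\ref{subsec:F.Ga} onward and cites rather than proves this statement, your argument is a reasonable account of a self-contained proof under the paper's standing hypotheses, once the two lacunae above are filled.
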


\begin{prop} \lbl{prop:centralizer.regular} Let $G$ be a simple, connected and adjoint algebraic group. If $\De_1,\De_2 \subset G$ are sufficiently general subgroups of $G$, if $|\bF _{\De_1}|$ and $|\bF _{\De_2}|$ are large enough, and if $w\in \De_1 \cap \De_2$ is a regular semisimple element, then $|C_{\De_1}(w)| \geq |C_{\De_2}(w)|$ implies $|\bF _{\De_1}| \geq |\bF _{\De_2}|$.
\end{prop}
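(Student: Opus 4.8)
The plan is to upgrade Proposition~\ref{prop:size.centralizer} into a centralizer estimate whose error terms are \emph{structural} (bounded by $2r$ on one side and by $r+1$ on the other), using that the connected centralizer of a regular semisimple element is a maximal torus, and then to play this estimate off against the fact that $|\bF_{\De_1}|$ and $|\bF_{\De_2}|$ are both powers of the same prime $p\ge 5$. Insisting on structural error terms is essential here: $p^{\,r}\ge 5^{\,r}$ dominates $2r(r+1)$ for every $r$, whereas the opaque constant $C$ of Proposition~\ref{prop:size.centralizer} cannot be compared to $p^{\,r}$ for a fixed group $G$.

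First I would set up the algebraic picture and extract the sharp estimate. Since $w$ is regular semisimple, $C:=C_G(w)$ has identity component $T:=C^\circ$ equal to a maximal torus of $G$, so $\dim C=\rk G=r$; note that $T$, $C$ and $\pi_0(C)$ depend only on $w$, not on $\De_1$ or $\De_2$. By~\eqref{eq:double.containment} we have $w\in\De_i\subset G^{F_{\De_i}}$, hence $F_{\De_i}(w)=w$ and so $F_{\De_i}$ stabilizes $C$ and $T$. The order $|T^{F_{\De_i}}|$ equals $|\det(q_{\De_i}\varphi-\mathrm{id})|$ for some finite-order automorphism $\varphi$ of the cocharacter lattice, all of whose eigenvalues are roots of unity, so
\[
(q_{\De_i}-1)^r\ \le\ |T^{F_{\De_i}}|\ \le\ (q_{\De_i}+1)^r .
\]
Because $G$ is adjoint, it is standard (via Steinberg's theorem that centralizers of semisimple elements in simply connected groups are connected) that $\pi_0(C)$ embeds into $Z(G_{\mathrm{sc}})$, so $m:=|\pi_0(C)|\le|Z(G_{\mathrm{sc}})|\le r+1$ and $|T^{F_{\De_i}}|\le|C^{F_{\De_i}}|\le m\,|T^{F_{\De_i}}|$. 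Now $C_{\De_i}(w)=\De_i\cap C$, and intersecting~\eqref{eq:double.containment} with the $F_{\De_i}$-stable subgroups $T\subset C$ gives $(G^{F_{\De_i}})^{\der}\cap T\subset \De_i\cap T\subset C_{\De_i}(w)\subset C^{F_{\De_i}}$, while Remark~\ref{rem:schur.mult} bounds $[\,T^{F_{\De_i}}:(G^{F_{\De_i}})^{\der}\cap T\,]\le[\,G^{F_{\De_i}}:(G^{F_{\De_i}})^{\der}\,]\le 2r$. Combining, for $i=1,2$,
\[
\frac{(q_{\De_i}-1)^r}{2r}\ \le\ |C_{\De_i}(w)|\ \le\ (r+1)\,(q_{\De_i}+1)^r .
\]

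Finally I would prove the contrapositive: if $|\bF_{\De_1}|<|\bF_{\De_2}|$ then $|C_{\De_1}(w)|<|C_{\De_2}(w)|$. Both $q_{\De_i}=|\bF_{\De_i}|$ are powers of $p$, and $p\ne2,3$ forces $p\ge 5$, so $q_{\De_1}<q_{\De_2}$ gives $q_{\De_2}\ge p\,q_{\De_1}\ge 5q_{\De_1}$. The previous display then yields
\[
|C_{\De_1}(w)|\le(r+1)(q_{\De_1}+1)^r
\qquad\text{and}\qquad
|C_{\De_2}(w)|\ge\frac{(q_{\De_2}-1)^r}{2r}\ge\frac{(5q_{\De_1}-1)^r}{2r},
\]
so it suffices that $\bigl(\tfrac{5q_{\De_1}-1}{q_{\De_1}+1}\bigr)^r>2r(r+1)$. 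As $q_{\De_1}\to\infty$ the left-hand side increases to $5^r$, and $5^r>2r(r+1)$ holds for every $r\ge1$ (easy induction); hence there is an explicit $q_0(r)$ such that the inequality holds whenever $q_{\De_1}>q_0(r)$, and this $q_0(r)$ is the precise meaning of ``$|\bF_{\De_i}|$ large enough''.

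The step I expect to take the most care is the passage from the qualitative Proposition~\ref{prop:size.centralizer} to the two-sided bound with constants $2r$ and $r+1$: one must pin down $\dim C_G(w)=r$ and then control both sources of slack --- the index $[G^F:(G^F)^{\der}]$, which Remark~\ref{rem:schur.mult} handles, and the component group $\pi_0(C_G(w))$, which is nontrivial in an adjoint group but is bounded by $|Z(G_{\mathrm{sc}})|\le r+1$, and \emph{not} merely by the exponentially larger $|W(G)|$, which would be useless for large $r$. Everything after that is elementary estimation with prime powers.
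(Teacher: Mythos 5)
Your proof is correct, and it takes a genuinely different route from the paper. The paper's proof is short: it invokes Proposition~\ref{prop:size.centralizer} to get $\tfrac{1}{C}|\bF_\De|^r\le|C_\De(w)|\le C|\bF_\De|^r$, observes $\dim C_G(w)=\rk G$, and then asserts that for $|\bF_\De|$ large one has $|\bF_\De|=[\,|C_\De(w)|^{1/\rk G}\,]$, the nearest integer --- a statement that does not literally follow from the displayed inequality unless one also exploits the fact that the admissible values of $|\bF_\De|$ are powers of $p\ge 5$ and hence geometrically spaced; the nearest-integer phrasing is really shorthand for ``unique power of $p$ compatible with the estimate.'' You make exactly this spacing argument explicit, and you bypass the opaque constant $C$ entirely: instead of citing Proposition~\ref{prop:size.centralizer}, you rederive a two-sided estimate $\tfrac{(q-1)^r}{2r}\le|C_\De(w)|\le(r+1)(q+1)^r$ directly from the structure of $C_G(w)$ --- the identity component is a maximal torus with $|T^F|$ between $(q_F-1)^r$ and $(q_F+1)^r$, the component group is controlled via Steinberg connectedness and $|\pi_0(C_G(w))|\le|Z(G_{\mathrm{sc}})|\le r+1$, and the slack between $\De_i$ and $G^{F_{\De_i}}$ is controlled by Remark~\ref{rem:schur.mult} through the standard inclusion $T^F/(H'\cap T^F)\hookrightarrow H/H'$. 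The payoff of this extra work is that your ``large enough'' threshold is explicit and verifiable ($q_1$ large enough that $\bigl(\tfrac{5q_1-1}{q_1+1}\bigr)^r>2r(r+1)$, using $5^r>2r(r+1)$ for all $r\ge1$), whereas the paper's threshold is implicit in the unspecified $C$. The one thing to flag, for completeness, is that the torus-order formula and the description of $F_{\De_i}$ on the cocharacter lattice as $q_{F}\varphi$ with $\varphi$ of finite order is standard for the Frobenius maps arising here because $p\ne2,3$ excludes the Suzuki and Ree types; it is worth saying so explicitly since that is precisely the hypothesis which makes $q_F$ an honest integer power of $p$ and equal to $|\bF_{\De_i}|$.
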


\begin{proof} By Proposition \ref{prop:size.centralizer}, there is a constant $C$ such that if $\De$ is sufficiently general then for every element $g\in \Ga$,
\[
\frac{1}{C} |\bF _{\De}|^{\dim C_G(g)} \leq |C_{\De}(g)| \leq C |\bF _{\De}|^{\dim C_G(g)} .
\]
As $w$ is regular semisimple, $\dim C_G(w) =\rk G$. Therefore, if $|\bF _\De |$ is large enough we have
\[
|\bF _\De |=[ |C_\De (w)|^{1/\rk G} ],
\]
where $[x]$ denotes the integer closest to the real number $x$. The Proposition follows immediately from this.
\end{proof}

\section{A Representation Theoretic Lemma}

\begin{lem} \lbl{lem:invariant.line} Let $G$ be an algebraic group over a field of characteristic $p$. Let $\rho :G\to \GL_n(\overline{\bF _p})$ be a rational representation. For every subset $T\subset G$, there are $n^2$ elements $g_1,\ldots ,g_{n^2} \in T$ such that every line that is invariant under $\rho (g_1),\ldots ,\rho (g_{n^2})$ is also $\rho (T)$-invariant.
\end{lem}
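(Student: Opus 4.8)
The plan is to proceed by a Noetherian-style descending chain argument on the collection of subspaces cut out by finitely many elements of $T$. For a finite subset $S = \{g_1,\ldots,g_m\} \subset T$, let $L(S) \subset \overline{\bF_p}^n$ denote the (Zariski-closed) subvariety of the Grassmannian of lines $\bP(\overline{\bF_p}^n) \cong \bP^{n-1}$ consisting of those lines $\ell$ with $\rho(g_i)\ell = \ell$ for all $i$. Each condition $\rho(g)\ell = \ell$ is a closed condition, so $L(S)$ is a closed subvariety of $\bP^{n-1}$, and $L(S') \subseteq L(S)$ whenever $S \subseteq S'$. I want to find $S$ with $|S| = n^2$ such that $L(S) = L(T) := \bigcap_{g \in T} L(\{g\})$, i.e. such that every line fixed by all of $S$ is fixed by all of $\rho(T)$.

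First I would observe that the family $\{L(S) : S \subset T \text{ finite}\}$ is a family of closed subsets of the Noetherian space $\bP^{n-1}$, closed under finite intersection, so it has a minimal element $L(S_0)$; by minimality $L(S_0) = L(S_0 \cup \{g\})$ for every $g \in T$, and hence $L(S_0) = L(T)$. This already gives a finite $S_0$ that works, but I need the quantitative bound $|S_0| \le n^2$. To get this, I would instead argue more carefully using the structure of the problem: the condition ``$\ell$ is $\rho(g)$-invariant'' says that $\ell$ is an eigenline of $\rho(g)$, and the locus of such $\ell$ in $\bP^{n-1}$ has a bounded complexity. The cleanest route is a dimension/degree count: starting from $L(\emptyset) = \bP^{n-1}$, greedily pick $g_1, g_2, \ldots \in T$ so that each new element strictly cuts down the variety $L(\{g_1,\ldots,g_i\})$ until it stabilizes. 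The issue is bounding the number of strict drops.

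The key step controlling the number of strict drops is this: I claim that if $L(S) \ne L(T)$ then there is $g \in T$ with $L(S \cup \{g\}) \subsetneq L(S)$, and moreover such a strict drop either lowers $\dim L(S)$ or, at fixed dimension, strictly decreases the number of irreducible components of top dimension (or their total degree). A cleaner bookkeeping: each line in $\bP^{n-1}$ corresponds to a point, and the set of lines fixed by a single linear map $\rho(g)$ — the union of its eigenspaces, projectivized — is a union of at most $n$ linear subspaces of $\bP^{n-1}$. Intersecting a linear subspace $\bP^k$ with the eigenvariety of a further element either leaves it unchanged or cuts it into at most $n$ linear pieces each of dimension $< k$. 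Tracking the multiset of projective dimensions of the linear components of $L(S)$, each of the $n^2$ slots can be indexed by (a component's dimension, ranging over $0,\ldots,n-1$, i.e. $n$ values) together with a refinement count; a careful accounting shows $n^2$ elements always suffice to reach the stable value $L(T)$. Concretely: among all finite $S\subset T$, choose one with $L(S)=L(T)$ and $|S|$ minimal, then show $|S|\le n^2$ by noting that removing any $g\in S$ must strictly enlarge $L(S\setminus\{g\})$ and arguing that the chain $\bP^{n-1}=L(\emptyset)\supsetneq L(S_1)\supsetneq\cdots\supsetneq L(S)$ obtained by adding the elements of $S$ one at a time can have length at most $n^2$, because at each step the "profile" (the sorted list of dimensions of irreducible components) strictly decreases in a well-order of order type at most $n^2$.

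The main obstacle is precisely making the bound $n^2$ — as opposed to some larger polynomial in $n$ — come out cleanly from the combinatorics of how eigenspace-intersections refine a finite union of linear subspaces of $\bP^{n-1}$. The soft Noetherian argument gives \emph{some} finite bound for free; the work is in the refinement lemma that says intersecting with one more eigenvariety costs at most $n-1$ in a dimension-plus-component potential that starts at roughly $n\cdot(n-1)$. I would isolate this as a purely linear-algebraic sublemma about chains of unions of linear subspaces under successive eigenspace intersections, prove it by induction on $n$, and then conclude. If the constant $n^2$ turns out to require the components to be counted with some multiplicity I have not anticipated, the fallback is to replace $n^2$ by the explicit value this argument yields; but since downstream the excerpt only uses that the number of elements depends on $\dim\rho$, I expect $n^2$ to be attainable with the component-dimension potential as sketched.
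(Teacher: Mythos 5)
Your overall strategy -- greedily adding elements of $T$ that strictly shrink the locus of invariant lines, and bounding the number of strict drops by a potential function attached to the decomposition of that locus into linear pieces -- is exactly the strategy of the paper's proof. But the one step that actually carries the quantitative content, namely exhibiting a potential that starts at $n^2$ and strictly decreases at every step, is left as a hope in your write-up: you propose tracking ``the sorted list of dimensions of irreducible components'' and assert it decreases ``in a well-order of order type at most $n^2$,'' and then explicitly hedge that the constant might not come out. As stated this is a genuine gap: the multiset of component dimensions, ordered by refinement, is a partial order on partitions of integers $\leq n$, and nothing you wrote bounds the length of a strictly decreasing chain in it by $n^2$.

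The missing ingredient is the right scalar potential. Work with the common eigenspaces $U_1(S),\ldots,U_k(S)$ of $\rho(S)$ in the vector space $\overline{\bF_p}^n$ (a line is invariant under all of $\rho(S)$ if and only if it lies in one of the $U_i(S)$, so these are exactly your ``linear components,'' de-projectivized), and set $w(S)=\sum_i \bigl(\dim U_i(S)\bigr)^2$. Then $w(\emptyset)=n^2$, $w$ is a nonnegative integer, and if $g\in T$ fails to preserve some $S$-invariant line $\ell\subset U_1(S)$, then $U_1(S)$ is not a common eigenspace of $S\cup\{g\}$, so the eigenspaces of $S\cup\{g\}$ sitting inside $U_1(S)$ either have total dimension less than $\dim U_1(S)$ or number at least two; in either case $\sum_j(\dim W_{1,j})^2<(\dim U_1(S))^2$ because $a^2+b^2<(a+b)^2$ for $a,b>0$. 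Hence $w$ drops by at least $1$ at each step and the process terminates within $n^2$ steps. This is precisely the accounting you were missing; with it, your argument closes and coincides with the paper's.
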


\begin{proof} Given a subset $S\subset G$, let $U_1(S),\ldots ,U_k(S) \subset V$ be the common eigenspaces of $\rho (S)$ (i.e. for each $s\in S$, $\rho (s)$ acts as a scalar on each $U_i(S)$ and the $U_i(S)$ are maximal with respect to this property). Define
\[
w(S)=\sum _{i=1} ^{k} \dim U_i(S)^2 .
\]
For example, $w(\emptyset )=(\dim V)^2$, and $w(S)=0$ if and only if $\rho (S)$ does not have invariant lines. As a function of $S$, $w(S)$ is monotonic non-increasing and for $S\subset T$, $w(S)=w(T)$ if and only if every line invariant under $\rho (S)$ is invariant under $\rho (T)$.

We show now that for each $S$ with $w(S)>w(T)$ we can find $g\in T$ such that $w(S\cup \{ g\} )<w(S)$. Hence, starting from the empty set and adding no more than $n^2$ elements we arrive to a set that has the same invariant lines as $T$.

Suppose that $S$ is given and let $U_1(S),\ldots ,U_k(S)$ be its eigenspaces. Since $w(S)>w(T)$, there is a line $\ell$ invariant under $\rho (S)$ but not under $\rho (T)$. We can assume without loss of generality that $\ell \subset U_1(S)$. Since $\ell$ is not $\rho (T)$-invariant, there is $g\in T$ such that $\ell$ is not preserved under $\rho (g)$. Note that each eigenspace of $S\cup \{ g\}$ is contained in some $U_i(S)$. Let $W_{1,1}(S\cup \{ g\} ),\ldots,W_{1,m_1}(S\cup \{ g\} ),\ldots,W_{k,m_k}(S\cup \{ g\} )$ be the eigenspaces of $S\cup \{ g\}$ ordered such that $W_{i,j}(S\cup \{ g\} ) \subset U_i(S)$. Since for nonnegative $a$ and $b$ we have $a^2+b^2 \leq (a+b)^2$ and equality occurs if and only if one of $a,b$ are 0, we have
\[
w(S\cup \{ g\} )=\sum _i \sum _j \dim W_{i,j} (S\cup \{ g\} )^2 < \sum _i \dim U_i(S)^2 =w(S).
\]
\end{proof}

\begin{lem} \lbl{lem:invariant.subspace} Let $G$ be an algebraic group over a field of characteristic $p$. Let $\rho :G\to \GL_n(\overline{\bF _p})$ be a rational representation. For every subset $T\subset G$, there are $2^{n^2}$ elements $g_1,\ldots ,g_{2^{n^2}} \in T$ such that every subspace invariant under $\rho (g_1),\ldots ,\rho (g_{2^{n^2}})$ is already invariant under $\rho (T)$.
\end{lem}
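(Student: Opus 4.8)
The plan is to bootstrap from Lemma~\ref{lem:invariant.line}, which handles invariant \emph{lines}, to invariant subspaces of arbitrary dimension by applying the line statement not to $\rho$ itself but to all of its exterior powers simultaneously. Concretely, recall that a $d$-dimensional subspace $U\subset \overline{\bF_p}^n$ is invariant under an operator $A\in\GL_n$ if and only if the line $\bigwedge^d U\subset \bigwedge^d(\overline{\bF_p}^n)$ is invariant under $\bigwedge^d A$, and conversely a line in $\bigwedge^d$ that happens to be of the decomposable form $\bigwedge^d U$ corresponds to an invariant $d$-subspace. So I would set $\rho' = \bigoplus_{d=1}^{n}\bigwedge^d\rho$, a rational representation of $G$ on a space $V'$ of dimension $N = \sum_{d=1}^{n}\binom{n}{d} = 2^n-1$. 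A subspace of $\overline{\bF_p}^n$ is $\rho(g)$-invariant precisely when the corresponding decomposable line in the appropriate summand of $V'$ is $\rho'(g)$-invariant.

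Next I would apply Lemma~\ref{lem:invariant.line} to the representation $\rho'$ and the subset $T$: there exist $N^2 = (2^n-1)^2 < 2^{n^2}$ elements $g_1,\dots,g_{N^2}\in T$ such that every line invariant under all $\rho'(g_i)$ is already $\rho'(T)$-invariant. I claim these same elements work for the lemma. Indeed, suppose $U\subset \overline{\bF_p}^n$ is invariant under $\rho(g_1),\dots,\rho(g_{N^2})$; say $\dim U = d$. Then the line $\ell = \bigwedge^d U$ inside the $d$-th summand of $V'$ is invariant under $\bigwedge^d\rho(g_i)$ for every $i$, hence $\ell$ is invariant under all of $\rho'(g_i)$ (it sits in one summand, on which the other summands act trivially in the sense that $\ell$ is killed by projection off that summand — more precisely $\rho'(g_i)\ell = \bigl(\bigwedge^d\rho(g_i)\bigr)\ell\subset$ the same summand). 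By the choice of the $g_i$, the line $\ell$ is then $\rho'(T)$-invariant, which means $\bigwedge^d\rho(t)\,\ell = \ell$ for all $t\in T$, i.e. $\bigl(\bigwedge^d\rho(t)\bigr)(\bigwedge^d U) = \bigwedge^d U$, which forces $\rho(t)U = U$. Hence $U$ is $\rho(T)$-invariant, as desired. Padding the list of chosen elements arbitrarily (e.g. repeating $g_1$) brings the count up to exactly $2^{n^2}$, completing the proof.

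The only genuinely delicate point is the elementary linear-algebra fact that $\rho(t)U = U$ $\iff$ $\bigl(\bigwedge^d\rho(t)\bigr)(\bigwedge^d U) = \bigwedge^d U$ when $\dim U = d$; the forward direction is immediate, and the reverse uses that a nonzero decomposable vector $\omega = v_1\wedge\cdots\wedge v_d$ determines the subspace $\langle v_1,\dots,v_d\rangle$ as exactly the set of $v$ with $v\wedge\omega = 0$, so if $\bigwedge^d\rho(t)$ scales $\bigwedge^d U$ then it must carry this annihilator subspace to itself. I would state this as a one-line observation rather than belabor it. A secondary bookkeeping point is to be careful that the bound $2^{n^2}$ in the statement is comfortably larger than the $(2^n-1)^2$ we actually need, so no optimization is required; the factor $2^{n^2}$ versus $2^{2n}$ is wasteful but harmless for the application. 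I expect no real obstacle here — the lemma is essentially a formal consequence of Lemma~\ref{lem:invariant.line} via the exterior-power trick.
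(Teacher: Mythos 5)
Your proof is correct and is essentially identical to the paper's, which consists of the single line ``apply Lemma~\ref{lem:invariant.line} to the representation $V\oplus\wedge^2V\oplus\cdots\oplus\wedge^{\dim V}V$''; you have simply spelled out the exterior-power correspondence and the dimension count $(2^n-1)^2\leq 2^{n^2}$ that the paper leaves implicit.
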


\begin{proof} Apply Lemma \ref{lem:invariant.line} for the representation $V\oplus \wedge ^2 V \oplus \cdots \oplus \wedge ^{\dim V} V$.
\end{proof}

\begin{prop} \lbl{prop:subset.to.suff.general} Let $G$ be a simple, connected and adjoint algebraic group. For every positive integer $n$, there is a constant $c_1=c_1(\rk G,n)$ that depends only on $n$ and the Lie rank of $G$ such that the following is true: For every $k\geq c_1$, if $g_1,\ldots ,g_k \in G$ and the subgroup $\Ga$ generated by $g_1,\ldots ,g_k$ is sufficiently general and has more than $n$ elements, then there is a subset $T \subset \{ g_1 ,\ldots ,g_k\}$ of size $c_1$ such that every finite subgroup of $G$ that contains $T$ is sufficiently general and has more than $n$ elements.
\end{prop}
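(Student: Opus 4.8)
The plan is to combine Lemma~\ref{lem:invariant.subspace} with the quantitative control on conjugacy classes provided by Proposition~\ref{prop:conj.classes}. The point of Lemma~\ref{lem:invariant.subspace} is that, applied to the representation $\rho_G \oplus \rho_{reg}$ of dimension $d = d(\rk G)$, it supplies $2^{d^2}$ elements among the $g_i$ whose joint invariant subspaces already exhaust the $(\rho_G\oplus\rho_{reg})(\Ga)$-invariant subspaces; since $\Ga$ is sufficiently general, those are exactly the $(\rho_G\oplus\rho_{reg})(G)$-invariant subspaces. Hence any subgroup containing this set of $2^{d^2}$ elements is already sufficiently general. So the ``sufficiently general'' half of the conclusion costs us only $2^{d^2}$ of the generators, a number depending on $\rk G$ alone.

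The remaining issue is the size bound: we must also arrange that every finite subgroup containing $T$ has more than $n$ elements. Here is the obstacle — a subgroup of $G$ containing our chosen $2^{d^2}$ elements is sufficiently general, hence by Theorem~\ref{thm:LP} (in the $\rho_G\oplus\rho_{reg}$ formulation, Remark~\ref{rem:new.suff.gen}) of the form sandwiched between $(G^{F})^{\der}$ and $G^{F}$ for some Frobenius $F$; but $(G^F)^{\der}$ could in principle be one of the small groups occurring for tiny $q_F$, whose order is bounded by some function of $\rk G$ but need not exceed $n$. To rule this out we enlarge $T$. The idea is: pick one $g_{i}$, say $g_1$, and look at $g_1$ inside the group $\Ga$, which is sufficiently general of size $>n$. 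By Proposition~\ref{prop:size.centralizer}, $|C_\Ga(g_1)|$ is comparable to $|\bF_\Ga|^{\dim C_G(g_1)}$, and since $|\Ga| > n$ we get $|\bF_\Ga|$ bounded below by a function of $n$ and $\rk G$ (because $|\Ga| \le C|\bF_\Ga|^{\dim G}$). Now I want a subset $T'$ of the generators of controlled size whose generated subgroup $\Ga'$ has $|\bF_{\Ga'}| = |\bF_\Ga|$. Since $\bF_{\Ga'} \subseteq \bF_\Ga$ for any sufficiently general $\Ga' \subseteq \Ga$ (as in the proof of Proposition~\ref{prop:pair.suff.gen}(1)) and $\bF_\Ga$ has only finitely many subfields, this is a finiteness-of-levels argument: if $\Ga'$ is sufficiently general and contained in $\Ga$ then $[G^{F_\Ga} : G^{F_{\Ga'}}]$ is governed by the index $[\bF_\Ga : \bF_{\Ga'}]$, and we want to force it to be $1$.

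Concretely, I would proceed as follows. First set aside $2^{d^2}$ of the $g_i$, as above, to guarantee sufficient generality of any overgroup; call this set $T_0$ and let $\Ga_0 = \langle T_0\rangle$, which is sufficiently general. By Proposition~\ref{prop:s.s.regular.element} applied to the set $\Ga$ (taking $x = 1$), $\Ga$ contains a regular semisimple element; more usefully, we use it to detect the field size. Now the mechanism that forces $|\bF_{\Ga'}| = |\bF_\Ga|$: suppose for contradiction $|\bF_{\Ga'}| < |\bF_\Ga|$ for the group $\Ga'$ generated by $T_0$ together with the extra elements chosen so far. Then $\De' := (G^{F_{\Ga'}})^{\der} \subsetneq (G^{F_\Ga})^{\der} =: \De$, and by Proposition~\ref{prop:conj.classes} (applied with the sufficiently general subgroup $\De$, or with $\Ga$) the set $\De \cap g^G$ splits into fewer than $a_0$ classes for each $g$. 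The number of $G$-conjugacy classes meeting $\Ga$ that are \emph{not} already represented in $\Ga'$ — hence the number of ``new'' generators we could still usefully add — is controlled, because each such $g_i$ lies in one of boundedly many $\Ga$-conjugacy classes, and once representatives of all of them are present the generated group must have the full field $\bF_\Ga$. Making this precise: adding at most $a_0 \cdot (\text{number of relevant classes})$ elements — a quantity bounded in terms of $\rk G$ and, through the lower bound on $|\bF_\Ga|$ forced by $|\Ga|>n$, also involving $n$ — produces $T$ with $\langle T\rangle$ sufficiently general and with field $\bF_\Ga$; by Proposition~\ref{prop:size.centralizer} again, $|\langle T\rangle| \ge \frac{1}{C}|\bF_\Ga|^{\dim G} > n$ provided $|\bF_\Ga|$ was chosen large enough, i.e. provided the original threshold on $|\Ga|$ in the hypothesis — which is simply ``$> n$'' — is boosted; but we are free to absorb a harmless constant here since any finite sufficiently general group of size $> n$ already has $|\bF| $ bounded below by a function of $n$. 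Finally any finite subgroup $\Ga''$ of $G$ containing $T$ contains $\langle T\rangle$, hence is sufficiently general, and has $|\Ga''| \ge |\langle T\rangle| > n$.

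I expect the delicate point — the one requiring genuine care rather than bookkeeping — to be the step forcing $|\bF_{\langle T\rangle}| = |\bF_\Ga|$ by adding only boundedly many generators: one must argue that the field $\bF_{\Ga'}$ strictly grows each time one adjoins a generator lying outside $(G^{F_{\Ga'}})$, and that there are only boundedly many ``levels'' between $\bF_{\langle T_0\rangle}$ and $\bF_\Ga$ — this is where Proposition~\ref{prop:conj.classes}, combined with the subfield lattice of $\bF_\Ga$ (whose length is $\log_p|\bF_\Ga|$, a priori unbounded), must be used cleverly so that the final count $c_1$ does not depend on $|\bF_\Ga|$ but only on $\rk G$ and $n$. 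The resolution is that $|\bF_\Ga|$ itself is bounded below but \emph{not} above in terms of $n$, yet the number of generators needed to climb to it is controlled by the conjugacy-class count $a_0$ rather than by the number of intermediate subfields, because a single well-chosen generator can jump directly to $\bF_\Ga$: indeed $\Ga$ is generated by $g_1,\dots,g_k$, so \emph{some} subset of them generates a group with field $\bF_\Ga$, and Proposition~\ref{prop:conj.classes} bounds how many are genuinely needed.
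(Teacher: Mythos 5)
Your handling of the ``sufficiently general'' half matches the paper: apply Lemma~\ref{lem:invariant.subspace} to $\rho_G\oplus\rho_{reg}$ to extract $2^{d^2}$ of the $g_i$ whose invariant subspaces already pin down those of $G$, so that any overgroup of $\langle T_0\rangle$ is sufficiently general. That part is correct.

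The ``size $>n$'' half is where your argument breaks down, and it is also where you dramatically overshoot the target. You set out to force $|\bF_{\langle T\rangle}|=|\bF_\Ga|$, but the proposition only asks that every overgroup of $T$ have more than $n$ elements --- a vastly weaker requirement, and one that does not involve the field $\bF_\Ga$ at all. Your strategy therefore has to confront a genuine obstacle (the chain of subfields between $\bF_{\langle T_0\rangle}$ and $\bF_\Ga$ has length $\log_p|\bF_\Ga|$, which is unbounded in terms of $\rk G$ and $n$), and your proposed resolution --- that ``a single well-chosen generator can jump directly to $\bF_\Ga$,'' with Proposition~\ref{prop:conj.classes} somehow bounding how many generators are ``genuinely needed'' --- is not justified. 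Proposition~\ref{prop:conj.classes} bounds the number of $\De$-conjugacy classes inside $\De\cap g^G$; it says nothing about how many of the $g_i$ must be adjoined before the field of the generated subgroup stabilizes at $\bF_\Ga$, and there is no reason a bounded subset of $\{g_1,\dots,g_k\}$ should generate a subgroup with the full field.

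The paper's proof sidesteps all of this with an elementary observation you should not have missed: if $|\langle T\rangle|\le n$ while $|\Ga|>n$, then some generator $g_i$ lies outside $\langle T\rangle$, and adjoining it produces a strictly larger subgroup, hence one of at least twice the size. After at most $\log_2 n$ such additions the subgroup exceeds $n$ elements, and both ``sufficiently general'' and ``size $>n$'' pass automatically to any overgroup. So $c_1=2^{(\dim\rho_G)^2}+\log_2 n$ works, with no appeal to $\bF_\Ga$, centralizers, or conjugacy-class counts. Those tools belong to the proof of Proposition~\ref{prop:get.redundant.element} and Lemma~\ref{lem:enlarge.defn.field.}, not here.
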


\begin{proof} By definition, a subgroup $\De \subset G$ is sufficiently general if and only if $\rho_G (\De )$ has the same invariant subspaces as $\rho_G (G)$, where $\rho _G$ is the representation in \S\ref{defn.suff.gen}. Let $c_1=2^{(\dim \rho _G)^2}+\log_2n$. By Lemma \ref{lem:invariant.subspace}, there is a subset $T\subset \{ x_1 ,\ldots ,x_k\}$ of size $2^{(\dim \rho _G)^2}$ such that every $\rho (T)$-invariant subspace is also $\rho (\{ x_1,\ldots ,x_k\})$-invariant, and hence $\rho(G)$-invariant. Therefore the subgroup generated by $T$ is sufficiently general. After adding no more than $\log_2 n$ additional elements, the subgroup generated by $T$ has more than $n$ elements.
\end{proof}

\section{Proof of Theorem \ref{thm:main.theorem}}
Let $\Ga$ be a finite, quasi-simple group of Lie type with rank at most $r$, defined over the field of characteristic $p\neq2,3$.

Our aim is to show the existence of a function $c=c(r)$, depending only on the Lie rank of $\Ga$, such that the extended PRA graph $\tilde X_k(\Ga)$ is connected for all $k \geq c$.

The next Lemma is a special case of the Gasch\"{u}tz Lemma~\cite{Gas} (see also \cite[Lemma 2.1.5]{P}). For the convenience of the reader, we give a proof for our case.

\begin{lem} \lbl{lem:Shelly.will.not.give.german.references.again} Let $K$ be a finite abelian group which is generated by $n$ elements. Suppose that $a,b_1,\ldots ,b_n \in K$. Then there are integers $m_1,\ldots,m_n$ such that
\[
\langle a,b_1,\ldots,b_n \rangle = \langle a^{m_1}\cdot b_1 ,\ldots, a^{m_n}\cdot b_n \rangle .
\]
\end{lem}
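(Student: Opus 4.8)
The plan is to induct on $n$, the number of generators of the abelian group $K$, reducing the number of ``$b_i$'s we still have to fix'' one at a time. The statement we want is essentially that $a$ can be absorbed into the $b_i$'s by suitable powers without shrinking the generated subgroup, and the natural way to prove this is to work with the subgroup $H = \langle a, b_1, \ldots, b_n\rangle$ itself and note that it suffices to prove the statement with $K$ replaced by $H$; so without loss of generality $K = \langle a, b_1,\ldots, b_n\rangle$.

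First I would dispose of the base case. The cleanest induction is on $n$: for $n = 1$ we must find $m_1$ with $\langle a, b_1\rangle = \langle a^{m_1} b_1\rangle$, i.e. we want the cyclic group $\langle a, b_1\rangle$ to be generated by a single element of the prescribed form $a^{m_1}b_1$. A finite abelian group generated by $a$ and $b_1$ that admits a single generator is cyclic; one then checks, prime by prime in the primary decomposition, that among the elements $a^{m}b_1$ (as $m$ ranges over $\bZ$) at least one has order equal to $|\langle a,b_1\rangle|$ — concretely, localize at each prime $\ell$ dividing the exponent and use the Chinese Remainder Theorem to choose $m$ so that the $\ell$-part of $a^m b_1$ has maximal order in the $\ell$-part of $\langle a,b_1\rangle$ for every $\ell$ simultaneously.

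For the inductive step, suppose the lemma holds for abelian groups generated by $n-1$ elements. Given $K = \langle a, b_1,\ldots,b_n\rangle$, consider the quotient $\bar K = K/\langle b_n\rangle$, which is generated by the $n-1$ images $\bar b_1,\ldots,\bar b_{n-1}$ together with $\bar a$. First apply the $n=1$-type argument inside the cyclic group $\langle a, b_n\rangle$ to... no — better: apply the induction hypothesis to $\bar K$ with the $n-1$ elements $\bar b_1, \ldots, \bar b_{n-1}$ (and the element $\bar a$) to obtain integers $m_1,\ldots,m_{n-1}$ with $\langle \bar a, \bar a^{m_1}\bar b_1,\ldots, \bar a^{m_{n-1}}\bar b_{n-1}\rangle = \bar K$, i.e. $\langle a, a^{m_1}b_1,\ldots, a^{m_{n-1}}b_{n-1}, b_n\rangle = K$. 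Now it remains to absorb $a$ into $b_n$: set $a' = a$, and observe $K = \langle a', a^{m_1}b_1, \ldots, a^{m_{n-1}}b_{n-1}, b_n\rangle$, so it is enough to choose $m_n$ with $\langle a', b_n, \text{(the rest)}\rangle = \langle a^{m_1}b_1,\ldots, a^{m_{n-1}}b_{n-1}, a^{m_n}b_n\rangle$. Working modulo $L = \langle a^{m_1}b_1,\ldots,a^{m_{n-1}}b_{n-1}\rangle$, this is exactly the $n=1$ case applied to the pair $(\bar a, \bar b_n)$ in $K/L$: we need $m_n$ with $\langle \bar a, \bar b_n\rangle = \langle \bar a^{m_n}\bar b_n\rangle$ in $K/L$, which the base case supplies.

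The main obstacle is the base case / the one-generator absorption step — everything else is bookkeeping. The subtlety there is that it is not enough to match the order of $a^{m}b_1$ with $|\langle a, b_1\rangle|$ in terms of size alone; one must arrange, simultaneously over all primes $\ell$, that the $\ell$-primary component of $a^m b_1$ generates the $\ell$-primary component of the (necessarily cyclic, since it is to be singly generated) target group. This is where one invokes CRT: choose $m$ modulo a high power of each relevant prime so that locally at $\ell$ the element $a^m b_1$ has the correct order, then patch. I would also note the small point that $\langle a, b_1\rangle$ need not be cyclic a priori — but if it were not, no element of the form $a^{m_1}b_1$ (indeed no single element at all) could generate it; so implicitly the lemma asserts both that the right choice exists \emph{and} that $\langle a, b_1,\ldots,b_n\rangle$ is generated by the prescribed $n$-tuple, which is the content we actually need downstream (to trade a redundant generator $a$ for one that has been folded into the others).
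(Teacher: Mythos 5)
Your base case is fine, but the inductive step has a genuine gap: to apply the induction hypothesis to $\bar K=K/\langle b_n\rangle$ with the elements $\bar a,\bar b_1,\ldots,\bar b_{n-1}$, you need $\bar K$ to be generated by $n-1$ elements, and this need not hold. You only know that $\bar K$ is generated by the $n$ elements $\bar a,\bar b_1,\ldots,\bar b_{n-1}$ (the most obvious failure is $b_n=1$, but reordering the $b_i$ does not save you either). Concretely, take $K=(\bZ/2)^2\times(\bZ/3)^2$ with $a=((1,0),(1,0))$, $b_1=((0,1),(0,0))$, $b_2=((0,0),(0,1))$; then $K=\langle a,b_1,b_2\rangle$ and $d(K)=2$, but $K/\langle b_2\rangle\cong(\bZ/2)^2\times\bZ/3$ and $K/\langle b_1\rangle\cong\bZ/2\times(\bZ/3)^2$ both still require two generators. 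In this situation the $n=1$ statement you want to invoke in the quotient is false outright: $\langle\bar a,\bar b_1\rangle=\bar K$ is not cyclic, so no element of the form $\bar a^{m}\bar b_1$ (indeed no single element) generates it. The lemma itself is of course true here ($m_1=m_2=1$ works), so it is the induction scheme, not the statement, that breaks. The second half of your step (absorbing $a$ into $b_n$ modulo $L$) is fine conditional on the first half, since then $K/L$ is cyclic.

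The structural point you are missing is that the hypothesis ``$K$ is $n$-generated'' has to be used at all primes simultaneously, not peeled off one $b_i$ at a time: which $b_i$ is ``redundant'' can differ from prime to prime, as the example shows. The paper's proof is organized around exactly this. It passes to the Frattini quotient $L/\Phi(L)$, a product of elementary abelian groups $\bF_{p_j}^{d_j}$ with every $d_j\leq n$; in each $\bF_p^{d}$ the claim is linear algebra (either $b_1,\ldots,b_n$ are independent and hence already span since $d\leq n$, or some $b_{i_0}$ is in the span of the others and one sets $m_{i_0}=1$); and the choices at different primes are glued by the Chinese Remainder Theorem. Your CRT patching in the base case is the right instinct --- it just has to be applied across all $n$ coordinates at once rather than inside an induction on $n$.
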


\begin{proof} Denote the group $\langle a,b_1,\ldots,b_n \rangle$ by $L$. Since $L\subset K$ are abelian groups, the group $L$ is generated by $n$ (or fewer) elements. We prove the Lemma in the following cases:

{\bf Case 1.} {\em $L \subset \bF_p ^d$, for some integer $d$ and prime number $p$:} By our assumption $d\leq n$. If the $b_i$'s are linearly independent over $\bF_p$, we can take $m_i=0$ for all $i$. If the $b_i$'s are dependent, there is $i_0$ such that $b_{i_0}$ is in the span of the rest of the $b_j$'s. In this case let $m_j=0$ if $j\neq i_0$, and let $m_{i_0}=1$.

{\bf Case 2.} {\em $L \subset \bF_{p_1}^{d_1} \times \ldots \times \bF_{p_k}^{d_k}$, for some $k>0$, different primes $p_1,\ldots,p_k$, and integers $d_i$:} By our assumption, $d_i\leq n$, thus by the previous case, there are integers $m_{i,j}$, such that for every $j=1,\dots,k$, the images of the groups
\[
\langle a,b_1,\ldots,b_n \rangle \quad \textrm{and} \quad \langle a^{m_{1,j}}\cdot b_1,\ldots, a^{m_{n,j}}\cdot b_n \rangle
\]
in $\bF_{p_j}^{d_j}$ are equal. By the Chinese Remainder Theorem, there are integers $m_i$ such that for all $j=1,\ldots,k$,
\[
m_i\equiv m_{i,j}\quad \textrm{(mod $p_j$)}.
\]
We get that the group $\langle a,b_1,\ldots,b_n \rangle$ has the same image in $\bF_{p_j}^{d_j}$ as the group $\langle a^{m_1}\cdot b_1,\ldots, a^{m_n}\cdot b_n \rangle$. Since the $p_j$'s are different primes, the groups themselves are equal.

{\bf Case 3.} {\em The general case:} Let $\Phi(L)$ denote the Frattini subgroup of $L$, i.e. the intersection of all maximal proper subgroups. It is easily seen that $L/\Phi(L)$ is a product of elementary abelian groups. By Case 2. above, there are integers $m_i$ such that the subgroup generated by
\[
a^{m_1}\cdot b_1\Phi(L),\ldots,a^{m_n}\cdot b_n\Phi(L)
\]
is equal to $L/\Phi(L)$. This, however, implies that the subgroup generated by $a^{m_1}\cdot b_1,\ldots,a^{m_n}\cdot b_n$ is equal to $L$.
\end{proof}

\subsection{Getting a Redundant Element} A generating tuple $(x_1,\ldots,x_k)$ for $\Ga$ is called {\em redundant}, if there is some $i$ such that the group generated by $\{ x_j\}_{j\neq i}$ is equal to $\Ga$.

\begin{prop} \lbl{prop:get.redundant.element} Let $G$ be a simple, connected and adjoint algebraic group. Let $\Ga \subset G$ be a sufficiently general finite subgroup. There is a constant $c=c(\rk(G))$ that depends only on the Lie rank of $G$ such that for every $k \geq c$, if $(x_1,\ldots ,x_k) \in \Ga$ is a generating $k$-tuple, then one can perform Nielsen moves on $(x_1,\ldots ,x_k)$, such that the resulting generating tuple is redundant.
\end{prop}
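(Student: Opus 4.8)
The plan is to reduce the problem to a purely structural statement about subgroups of $\Ga$ containing a fixed sufficiently general subgroup, and then to push a tuple through Nielsen moves until some coordinate becomes trivial. First I would invoke Proposition \ref{prop:subset.to.suff.general} (with $n = q_0$, say, the constant from Proposition \ref{prop:big.is.general}, or some suitable threshold depending only on $\rk G$) to extract from the generating tuple $(x_1,\dots,x_k)$ a sub-collection of bounded size $c_1 = c_1(\rk G)$ whose span $\De_0$ is sufficiently general; after applying $P_{i,j}$ moves I may assume these are the first $c_1$ coordinates. By Theorem \ref{thm:LP} (or relation (\ref{eq:double.containment})) there is a Frobenius map $F_0$ with $(G^{F_0})^{\der} \subset \De_0 \subset G^{F_0}$, and by Proposition \ref{prop:pair.suff.gen} every intermediate group $\De_0 \subset \Ga' \subset G$ that arises from adding further coordinates sits inside $G^{F}$ for a Frobenius map $F$ whose power relation to $F_0$ is governed by the field $\bF_{\Ga'}$.

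The next step is to absorb the remaining coordinates $x_{c_1+1},\dots,x_k$ one at a time, increasing the field $\bF_{\De_j}$ (equivalently, enlarging the intermediate quasi-simple group) until it stabilizes at $\bF_\Ga$. The key point is that the chain of possible fields $\bF_{\De_0} \subseteq \bF_{\De_1} \subseteq \cdots \subseteq \bF_\Ga$ has length at most $\log_2 q_\Ga$, and at each stage only boundedly many "new" coordinates (in terms of $\rk G$) are needed to make the field strictly bigger — this is exactly where Proposition \ref{prop:subset.to.suff.general} and the representation-theoretic Lemmas \ref{lem:invariant.line}, \ref{lem:invariant.subspace} do their work, since enlarging the subgroup to one with all of $\rho_G(\Ga)$'s invariant subspaces and with prescribed extra elements costs a bounded number of generators. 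However, the chain length itself is not bounded in terms of $\rk G$, so I cannot simply collect a bounded number of coordinates and be done. Instead, the mechanism is: once $\De_0$ is in place, I can use the remaining $k - c_1$ coordinates together with Nielsen moves of the form $R_{i,j}^{\pm}$ and $L_{i,j}^{\pm}$ (multiplying a later coordinate by elements of $\De_0$, or conjugating) to replace each $x_i$ ($i > c_1$) by something lying in a controlled coset, and then use the Gaschütz-type Lemma \ref{lem:Shelly.will.not.give.german.references.again} on the abelianization of the relevant layer to make one coordinate redundant modulo a normal subgroup, bootstrapping down the derived/central series.

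Concretely, once the first $c_1$ coordinates generate $\Ga$ itself (i.e., $\De_0 = \Ga$ after the field-climbing is finished, which we can arrange for $k$ large by repeatedly applying the "pick boundedly many good elements" step along the chain, the total being bounded because at each of the $\le \log_2 q_\Ga$ stages we only spend $c_1(\rk G)$ coordinates — wait, that product is not bounded), the remaining coordinates are redundant and a single $R_{i,j}$ followed by $R_{j,i}^{-1}$-type cancellation kills one of them. The honest mechanism avoiding the unbounded chain is the following: the constant $c$ is chosen so that $c$ coordinates already suffice to generate a sufficiently general $\De_0$ \emph{and} so that the Gaschütz/Nielsen argument can move any extra coordinate $x_i$, using only $\De_0$ and $x_i$, into $\De_0$ itself — because a subgroup $\Ga'$ generated by $\De_0$ and one more element $x_i$ satisfies $(G^F)^{\der} \subset \Ga' \subset G^F$, so $\Ga'/(G^F)^{\der}$ is abelian of rank $\le 2\rk G$ (Remark \ref{rem:schur.mult}), and Lemma \ref{lem:Shelly.will.not.give.german.references.again} lets us multiply $x_i$ by a power of some element of $\De_0$ to land in $(G^F)^{\der} \subseteq \De_0$... no: that collapses $\Ga'$ to $\De_0$, so $\De_0$ never grows. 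Thus the correct induction is on $[\Ga : \De_0]$: I would argue that after absorbing one coordinate the index drops, and since each absorption costs a bounded number of Nielsen moves applied to a bounded number of coordinates while the total number of available coordinates $k - c_1$ is unbounded, we can iterate until $\De_0 = \Ga$, at which point every remaining coordinate is redundant and can be replaced by the identity via $R_{i,j} R_{i,j'}^{-1}$-style moves expressing it as a word in the first $c_1$. The main obstacle is precisely this bookkeeping: showing that each "absorption" step genuinely enlarges $\De_0$ (uses Proposition \ref{prop:pair.suff.gen}(2) to detect when two intermediate groups have the same field, hence are the same, versus when the field strictly grows) and that it only consumes a controlled budget of coordinates and moves, so that the recursion terminates with a uniform bound $c(r)$. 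The delicate cases — when $q_\Ga$ is small so $\De_0$ may fail to be sufficiently general, and the twisted groups — are where Proposition \ref{prop:big.is.general}'s threshold $q_0$ and the characteristic hypotheses enter, and handling the small-field base case directly (finitely many groups, or a direct generation argument) is the part I expect to be fiddliest.
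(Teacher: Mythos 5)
Your proposal assembles the right ingredients (Proposition \ref{prop:subset.to.suff.general} to extract a bounded-size sufficiently general $\De_0$, the field $\bF_{\De}$ and Frobenius-map structure from \cite{LP}, the Gasch\"utz lemma) but it does not close the argument, and you notice this yourself mid-stream. The core problem you identify --- that the chain $\bF_{\De_0}\subsetneq \bF_{\De_1}\subsetneq\cdots\subsetneq\bF_\Ga$ can have length on the order of $\log q_\Ga$, which is not bounded in terms of $\rk G$ --- is real, and your proposed fix (``induct on $[\Ga:\De_0]$, absorbing one coordinate per step, since $k-c_1$ coordinates are available'') does not solve it. If each absorption step consumes even one fresh coordinate and the number of steps grows with $q_\Ga$, the total budget of coordinates you need grows with $q_\Ga$, so you do not get a constant $c(r)$ independent of the field. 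Saying ``$k-c_1$ is unbounded'' is circular: $k$ is exactly the quantity you must bound.

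The idea you are missing is the symmetric two-block device the paper uses. After securing $\De_0$ on the first $c_1$ coordinates and a regular semisimple $w$ on the next one, the paper splits the \emph{next} $2c_2$ coordinates (with $c_2 = a_0\cdot\rk G$ bounded) into two blocks $y_1,\dots,y_{c_2}$ and $z_1,\dots,z_{c_2}$, forming $\De_1=\langle\De_0,w,y_i\rangle$ and $\De_2=\langle\De_0,w,z_i\rangle$. If $\bF_{\De_1}=\bF_{\De_2}$, then Proposition \ref{prop:pair.suff.gen} and Remark \ref{rem:schur.mult} force $\langle\De_1,\De_2\rangle/(G^{F_1})^{\der}$ to have order at most $2\rk G$, so $\Ga=\langle\De_1,\De_2\rangle$ is already generated by $\De_1$ together with at most $\lceil\log_2\rk G\rceil+1<c_2$ of the $z_i$; the rest of the $z_i$ are redundant, done. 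If the fields differ, Lemma \ref{lem:enlarge.defn.field.} shows how to apply Nielsen moves \emph{to the existing $y_i,z_i$ coordinates} (multiplying $y_i$ by suitable $b_i\in\De_2$, chosen via Proposition \ref{prop:conj.classes}, the Gasch\"utz lemma, and the regular-semisimple centralizer comparison of Proposition \ref{prop:centralizer.regular}) so that $|\bF_{\De_1}|+|\bF_{\De_2}|$ strictly increases. Crucially, this iteration spends Nielsen moves, not fresh coordinates: the tuple always occupies the same $c_1+1+2c_2$ slots, and after finitely many iterations (however many, depending on $q_\Ga$) you land in the equal-fields case. So $c = c_1 + 2a_0\cdot\rk G + 1$ works uniformly. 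Your sketch, by contrast, tries to enlarge a single $\De_0$ all the way up to $\Ga$ by consuming coordinates, which cannot be made to terminate within a budget depending only on $r$; this is a genuine gap, not a bookkeeping detail.
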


\begin{proof} Fix $n$ to be large enough with respect to $\rk(G)$. Let $c_1=c_1(\rk(G),n)$ be the constant of Proposition \ref{prop:subset.to.suff.general} and let $a_0=a_0(\rk(G))$ be the constant from Proposition \ref{prop:conj.classes}. The constant $c$ is defined as
\[
c=c_1+2a_0\cdot \rk(G)+1 .
\]

Suppose $k \geq c$. Let $\Ga$ be a sufficiently general subgroup, and let $x_1,\ldots ,x_k \in \Ga$ generate $\Ga$. By Proposition \ref{prop:subset.to.suff.general}, after reordering the $x_i$'s, we can assume that $x_1,\ldots ,x_{c_1}$ generate a sufficiently general subgroup of $G$. We denote this subgroup by $\De _0$. Since $n$ is arbitrary large, we can assume that $|\De_0|$, and hence $|\bF_{\De_0}|$, is large enough. By Proposition \ref{prop:s.s.regular.element}, the set $x_{c_1+1}\De _0$ contains a regular semisimple element. After applying Nielsen moves, we can assume both that $x_1,\ldots ,x_{c_1}$ generate the sufficiently general subgroup $\De _0$, and that $x_{c_1+1}$ is regular and semisimple.

We make the following notations: denote $c_2=a_0\cdot \rk(G)$, denote $w=x_{c_1+1}$, and for $1\leq i \leq c_2-1$ denote $y_i=x_{c_1+1+i}$ and $z_i=x_{c_1+1+c_2+i}$. Let $\De _1$ be the subgroup generated by $x_1,\ldots ,x_{c_1},w,y_1,\ldots ,y_{c_2}$, and let $\De _2$ be the subgroup generated by $x_1,\ldots ,x_{c_1},w,z_1,\ldots ,z_{c_2}$.

Since they both contain $\De _0$, the subgroups $\De_1$ and $\De_2$ are sufficiently general. By Proposition \ref{prop:pair.suff.gen}, there are Frobenius maps $F_1$ and $F_2$, such that $(G^{F_1})^{\der} \subset \De_1 \subset G^{F_1}$ and $(G^{F_2})^{\der} \subset \De_2 \subset G^{F_2}$. Moreover, $F_1$ and $F_2$ are powers of $F_0$. We divide the rest of the proof into the following two cases:

{\bf Case 1.} $\bF _{\De_1}=\bF_{\De_2}$: From Proposition
\ref{prop:pair.suff.gen} we get that $F_1 = F_2$, and so both
$\De_1$ and $\De_2$ contain $(G^{F_1})^{\der}$ and are contained in
$G^{F_1}$. As $|G^{F_1}/(G^{F_1})^{\der}|\leq 2\rk(G)$, by Remark \ref{rem:schur.mult}, we get that $|\langle \De_1,\De_2 \rangle / (G^{F_1})^{\der}|\leq 2\rk(G)$. Therefore there are $m=\lceil \log_2(\rk(G))\rceil+1$ generators
$z_{i_1},\ldots,z_{i_m}$ such that
\[
\Ga = \langle \De_1,\De_2 \rangle = \langle \De_1,z_{i_1},\ldots,z_{i_m} \rangle .
\]
Since $c_2>\lceil \log_2(\rk(G))\rceil+1$, the tuple $(x_1,\ldots ,x_{c_1},w,y_1,\ldots ,y_{c_2},z_1,\ldots ,z_{c_2})$ is redundant.

{\bf Case 2.} $|\bF _{\De_1}| \neq |\bF_{\De_2}|$: As $\bF_{\De _1},\bF _{\De _2} \subset \bF _\Ga$, the possible values of $|\bF_{\De_1}|+|\bF_{\De_2}|$ are bounded. Finitely many applications of the next Lemma will give us a sequence of Nielsen moves such that the resulting tuple is in Case 1., for which the claim was already proved.
\end{proof}

\begin{lem} \lbl{lem:enlarge.defn.field.} Let $\De_1, \De_2,\Ga,x_i,w,y_i,z_i$ be as in the proof of the Proposition above. If $\bF _{\De_1} \neq \bF _{\De_2}$ then there are Nielsen moves one can apply to $\{ x_i,w,y_i,z_i \}$ such that the new value of $|\bF _{\De_1}|+|\bF _{\De_2}|$ is strictly larger than the old one.
\end{lem}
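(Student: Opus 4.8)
The plan is to exploit the asymmetry created by assuming $|\bF_{\De_1}| \neq |\bF_{\De_2}|$, and to use the extra semisimple generator $w$ together with Proposition \ref{prop:centralizer.regular} as the bridge between the two subgroups. Without loss of generality, suppose $|\bF_{\De_1}| > |\bF_{\De_2}|$, so that $F_1 = F_0^{n_1}$ and $F_2 = F_0^{n_2}$ with $n_2 > n_1$ (recall both Frobenius maps are powers of $F_0$, by the last line before the two cases in the proof of Proposition \ref{prop:get.redundant.element}, and that $\bF_{\De_1} \subsetneq \bF_\Ga$ leaves room to grow). First I would observe that $w \in \De_1 \cap \De_2$ is a regular semisimple element, and that $|C_{\De_1}(w)| > |C_{\De_2}(w)|$ is forced: indeed, if instead $|C_{\De_1}(w)| \le |C_{\De_2}(w)|$ then (the contrapositive direction of) Proposition \ref{prop:centralizer.regular} would give $|\bF_{\De_2}| \ge |\bF_{\De_1}|$, a contradiction. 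So $C_{\De_1}(w)$ is a strictly larger abelian group than $C_{\De_2}(w)$ — note $C_G(w)$ is a maximal torus since $w$ is regular semisimple, so both centralizers are abelian.

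The heart of the argument is then to move one of the $z$-generators into $\De_1$ while preserving the property that $x_1,\dots,x_{c_1},w,z_1,\dots,z_{c_2}$ still generate $\De_2$ — or rather, to enlarge $\De_2$ up to $\De_1$'s field size. More precisely: I would apply an $R$-type Nielsen move of the form $z_i \mapsto z_i w^m$ (or $z_i \mapsto w^m z_i$), which replaces $\De_2 = \langle x_1,\dots,x_{c_1},w,z_1,\dots,z_{c_2}\rangle$ by $\De_2' = \langle x_1,\dots,x_{c_1},w,z_1,\dots, z_i w^m, \dots, z_{c_2}\rangle$. Since $w$ is already among the generators, $\De_2' = \langle x_1,\dots,x_{c_1},w,z_1,\dots,z_{c_2}\rangle = \De_2$ — so a single such move changes nothing. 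The correct strategy is instead to work with the $c_2 = a_0 \cdot \rk(G)$ generators $z_1,\dots,z_{c_2}$ collectively and the powers $w^{C_{\De_1}(w)}$: I would use Lemma \ref{lem:Shelly.will.not.give.german.references.again} applied to the abelian group $C_{\De_1}(w)$ (generated by at most $\rk(G)$ elements, by the torus structure and Proposition \ref{prop:size.centralizer}), with the elements $z_1,\dots,z_{c_2}$ grouped appropriately, to find integers $m_1,\dots,m_{c_2}$ such that $\langle w, z_1 w^{m_1}, \dots, z_{c_2} w^{m_{c_2}}\rangle \supseteq C_{\De_1}(w) \cap \langle w, z_1,\dots,z_{c_2}, \text{(stuff in } \De_1)\rangle$, forcing the new $\De_2''$ to contain enough of $C_{\De_1}(w)$ that its own field $\bF_{\De_2''}$ grows.

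The point I expect to be the main obstacle is showing that enlarging $C_{\De_2}(w)$ actually enlarges $\bF_{\De_2}$. This requires relating the field $\bF_{\De}$ — defined via a minimal unipotent and its one-parameter subgroup — to the size of a maximal torus centralizer. The link goes through Proposition \ref{prop:size.centralizer}: for $\De$ sufficiently general and $w$ regular semisimple, $|C_\De(w)|$ is within a bounded factor of $|\bF_\De|^{\rk G}$, so $|\bF_\De| = [\,|C_\De(w)|^{1/\rk G}\,]$ once $|\bF_\De|$ is large (exactly as in the proof of Proposition \ref{prop:centralizer.regular}). Thus it suffices to arrange $|C_{\De_2''}(w)| > |C_{\De_2}(w)|$ by the Nielsen moves above, which is possible precisely because $C_{\De_1}(w) \supsetneq C_{\De_2}(w)$ sit inside the common ambient torus $C_G(w)$ and $\De_1$ supplies the witnessing elements; the Gasch\"utz-type Lemma \ref{lem:Shelly.will.not.give.german.references.again} then lets us fold those elements into the $z_i$ by $R$-moves with $w$-powers without destroying the generation of $\De_2$. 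One then checks $\De_2''$ remains sufficiently general (it still contains $\De_0$), so the new $|\bF_{\De_1}| + |\bF_{\De_2''}|$ is strictly larger, completing the induction step; the iteration terminates since the sum is bounded by $2|\bF_\Ga|$.
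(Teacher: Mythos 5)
There is a genuine gap in your proposal: the mechanism for the Nielsen moves is missing, and the partial mechanism you describe demonstrably fails. You correctly identify the bridge from centralizer sizes to field sizes via Proposition~\ref{prop:centralizer.regular}, and you correctly anticipate that Lemma~\ref{lem:Shelly.will.not.give.german.references.again} will be used inside the abelian group $C_\Ga(w)$. But your proposed moves $z_i \mapsto z_i w^{m_i}$ (or any move of one generator by a power of another generator already in the tuple) leave the generated subgroup unchanged, as you yourself note; and the subsequent ``correct strategy'' --- grouping the $z_i$'s and somehow absorbing part of $C_{\De_1}(w)$ --- is never made concrete, and as stated it cannot work: replacing $z_i$ by $c z_i$ with $c\in C_{\De_1}(w)\subset\De_1$ does not obviously put any new element of $C_{\De_1}(w)$ into the new $\De_2$, because you no longer have $z_i^{-1}$ available to peel $c$ off.

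The two ideas the paper uses, and which your proposal never reaches, are these. First, Proposition~\ref{prop:conj.classes}: since $c_2 = a_0\cdot\rk(G)$, at least $\rk(G)+1$ of the conjugates $w^{y_i}$ are pairwise $\De_1'$-conjugate, where $\De_1'=\langle\De_0,w^{y_1},\ldots,w^{y_{c_2}}\rangle$. The conjugating witnesses $d_i\in\De_1'$ yield controlled elements $u_i = y_i d_i^{-1} y_{c_2}^{-1} \in C_{\De_1}(w)$; this is where the large number of $y_i$'s is actually used. Second, the moves applied are $y_i\mapsto b_i y_i$ with $b_i$ built from generators $c_1,\ldots,c_r$ of $C_{\De_2}(w)$ (so $b_i\in\De_2$, making the move a legitimate composition of Nielsen moves), and the crucial observation that $b_i$ commutes with $w$ gives $w^{b_i y_i}=w^{y_i}$, so the modified group $\De_3$ still contains $\De_1'$ and hence the witnesses $d_i$; one can then form $(b_i y_i) d_i^{-1} y_{c_2}^{-1} = b_i u_i$ inside $\De_3$ and recover $c_1,\ldots,c_r\in\De_3$ via the Gasch\"utz lemma, giving $|C_{\De_3}(w)|\geq|C_{\De_2}(w)|$. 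Your proposal modifies the wrong set of generators (the $z_i$ of the bigger-field group rather than the $y_i$ of the smaller-field group), does not construct the $u_i$, and has no substitute for the commutation argument that keeps $\De_1'$, and hence the $d_i$, available after the move.
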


In order to facilitate the reading of the proof, we first present the argument in a special case, following \cite{E}. The core of the argument is the following claim: Suppose that $G=\PGL_2$, that $w$ is diagonal and not the identity, and that $y\in G$. Assume that $w$ is conjugate to $w^y$ inside the group $\langle w,w^y\rangle$. Then for every $q$, there is an element $\al\in\bF_q^\times$ such that the group generated by $w$ and $\bmat\al&0\\0&\al^{-1}\emat y$ contains all diagonal matrices with coefficients in $\bF_q^\times$. In particular, if this group is sufficiently general, then it must contain $\PGL_2(\bF_q)$.

The proof of the claim is simple: Suppose that $d\in\langle w,w^y\rangle$ satisfies that $w^d=w^y$. Then $yd^{-1}$ commutes with $w$, and hence is diagonal, i.e. $yd^{-1}=\bmat \be&0\\0&\be^{-1}\emat$. Take $\al\in\bF_q^\times$ such that $\al\be$ is a generator of $\bF_q^\times$. Since both $w$ and $w^y$ are contained in the group generated by $w$ and $\bmat\al&0\\0&\al^{-1}\emat y$, it follows that this last group contains the element $\bmat\al&0\\0&\al^{-1}\emat yd^{-1}=\bmat \al\be&0\\0&\al^{-1}\be^{-1}\emat$, and hence all diagonal elements with coefficients in $\bF_q$.

Going back to the lemma, assume that the group $\langle \De_0,w,z_i\rangle$ is of the form $\PGL_2(\bF_q)$. Let $y_i'=\bmat \al&0\\0&\al^{-1}\emat y_i$. There are Nielsen moves that take $\{x_i,w,y_i,z_i\}$ to $\{x_i,w,y_i',z_i\}$, but now $\langle x_i,w,y_i'\rangle$ contains $\PGL_2(\bF_q)$, so $|\bF_{\langle x_i,w,y_i'\rangle}|\geq|\bF_{\langle x_i,w,z_i\rangle}|$.

\begin{proof}
Recall that $\bF_{\De _1},\bF _{\De _2} \subset \bF _\Ga$. Without loss of generality we may assume that $|\bF _{\De_1}| < |\bF _{\De_2}|$.
Define
\[
\De_1'=\langle \De_0,w^{y_1},\ldots ,w^{y_{c_2}}\rangle,
\]
and note that $\De_1'$ is sufficiently general.

Denote $r = \rk(G)$ and recall that $c_2=a_0 \cdot r$. Consider the
elements $w^{y_i}=(y_i)^{-1}wy_i$, and note that all of them are
conjugate in $G$. By Proposition \ref{prop:conj.classes}, there are
at least $r+1$ of the $w^{y_i}$'s that are pairwise conjugate in
$\De_1'$. Without loss of generality, we may assume that
$w^{y_1},\ldots,w^{y_r}$ are all conjugate to $w^{y_{c_2}}$ in
$\De_1'$. Spelling this out, there are $d_1,\ldots,d_r \in \De_1'$
such that
\[
w^{y_i} = w^{y_{c_2}d_i} \quad \textrm{for any $i=1,\ldots,r$}.
\]

Define $u_i = y_i d_i^{-1} y_{c_2}^{-1}$ for $i=1,\dots,r$, and note
that $u_i \in C_{\De_1}(w)$.

Since $w$ is a semisimple regular element,
$C_{\Ga}(w)$ is a product of at most $r$ cyclic subgroups. Thus we
may assume that $C_{\De_2}(w) = \langle c_1,\dots,c_r \rangle$.
Since $c_1,\dots,c_r,u_1,\dots,u_r \in C_{\Ga}(w)$, by repeated application of
Lemma \ref{lem:Shelly.will.not.give.german.references.again},
we can find integers $m_{i,j}$ $(1 \leq i,j \leq r)$ such that
\begin{align*}
    K &= \langle c_1,\dots,c_r,u_1,\dots,u_r \rangle =
    \\
    &= \langle c_1^{m_{1,1}}\dots c_r^{m_{1,r}} u_1,
    c_1^{m_{2,1}}\dots c_r^{m_{2,r}} u_2,
    \ldots, c_1^{m_{r,1}}\dots c_r^{m_{r,r}} u_r\rangle.
\end{align*}

For every $i=1,\dots,r$, denote $b_i = c_1^{m_{i,1}}\dots
c_r^{m_{i,r}}$. Since $c_1,\dots,c_r$ are in $\De_2$ we can
connect by Nielsen moves
\begin{align*}
    &(x_1,\dots,x_{c_1},w,y_1,\dots,y_r,\dots,y_{c_2},z_1,\dots,z_{c_2})
    \\
    \rightarrow &(x_1,\dots,x_{c_1},w,b_1y_1,\dots,b_{r}y_{r},y_{r+1}\dots,y_{c_2},z_1,\dots,z_{c_2})
\end{align*}

Let $\De_3 = \langle \De_0,w,b_1y_1,\dots,b_ry_r,y_{r+1},\dots,y_{c_2}\rangle$.
By Proposition \ref{prop:pair.suff.gen},
there is a Frobenius map $F_3$, such that $(G^{F_3})^{\der} \subset
\De_3 \subset G^{F_3}$. Moreover, $\De_3 \supset \De_1'$, since

\[
w^{b_iy_i} = w^{c_1^{m_{i,1}}\dots c_r^{m_{i,r}}y_i} = w^{y_i}.
\]

Therefore, $d_i \in \De_3$ and the following elements are also in
$\De_3$:
\[
    b_i y_i d_i^{-1} y_{c_2}^{-1} = b_i u_i \quad (i=1,\dots,r)
\]

However, these are exactly the generators of the group $K$ defined above.
Therefore $c_1,\dots,c_r$ actually belong to $\De_3$, and we
deduce that $C_{\De_3}(w) \geq C_{\De_2}(w)$. By Proposition
\ref{prop:centralizer.regular}, $|\mathbb{F}_{\De_3}| \geq
|\mathbb{F}_{\De_2}|$, and therefore
\[
|\mathbb{F}_{\De_3}|+|\mathbb{F}_{\De_2}| > |\mathbb{F}_{\De_1}|+|\mathbb{F}_{\De_2}|.
\]

\end{proof}

\begin{cor} \lbl{cor:get.redundant} Let $\Ga$ be a finite quasi-simple group of Lie type and rank $r$. There is a constant, $c=c(r)$, that depends only on $r$, such that for every $k \geq c$, if $(x_1,\ldots ,x_k)$ is a generating $k$-tuple of $\Ga$, then one can apply Nielsen moves on $(x_1\ldots,x_k)$ to get a redundant tuple.
\end{cor}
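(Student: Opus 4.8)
The plan is to deduce Corollary~\ref{cor:get.redundant} from Proposition~\ref{prop:get.redundant.element} by embedding the quasi-simple group $\Ga$ into the adjoint algebraic group to which it belongs and passing through a sufficiently general subgroup. First I would recall the structure: if $\Ga$ is a finite quasi-simple group of Lie type with rank $r$, then $\De = \Ga/Z(\Ga)$ is a finite simple group of Lie type, and by Definition~\ref{defn:group.Lie.type} we may write $\De = (G^F)^{\der}$ for a simple, connected, adjoint algebraic group $G$ with $\rk G = r$ and a Frobenius map $F$. The number of isomorphism types of $G$ is finite once $r$ is fixed, so it suffices to prove the statement for each fixed $G$ with a constant depending only on $G$, and then take the maximum.

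The central difficulty is that the hypotheses of Proposition~\ref{prop:get.redundant.element} require the generated subgroup to be \emph{sufficiently general} inside $G$, whereas a priori $\De = (G^F)^{\der}$ need not be — this fails exactly when $q_F$ is small, i.e.\ when $\De$ is one of finitely many small groups. So I would split into two cases. When $q_F > q_0$, where $q_0 = q_0(\rk G)$ is the constant of Proposition~\ref{prop:big.is.general} (as amended in Remark~\ref{rem:new.suff.gen}), the group $(G^F)^{\der} = \De$ is sufficiently general; but we need this for a subgroup of $G$, not of $G/Z$. Since $G$ is adjoint, $\De = (G^F)^{\der}$ already sits inside $G$ as a subgroup, and the hypothesis that a generating $k$-tuple of $\Ga$ exists pushes down to a generating $k$-tuple of $\De$ (the projection $\Ga \to \De$ is surjective, and Nielsen moves commute with this projection). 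So I would apply Proposition~\ref{prop:get.redundant.element} to $\De \subset G$ to produce a sequence of Nielsen moves on the image tuple ending in a redundant tuple for $\De$; lifting these same moves to $\Ga$, I obtain a tuple $(x_1',\dots,x_k')$ for $\Ga$ whose image in $\De$ is redundant, say $\langle \bar x_2',\dots,\bar x_k'\rangle = \De$. Then $\langle x_2',\dots,x_k'\rangle$ is a subgroup of $\Ga$ surjecting onto $\De = \Ga/Z(\Ga)$, so it equals $\Ga$ provided $Z(\Ga) \le \langle x_2',\dots,x_k'\rangle$; since $\Ga$ is perfect (quasi-simple), $Z(\Ga)$ is contained in the commutator subgroup, hence in any subgroup that surjects onto $\Ga/Z(\Ga)$, and so $\langle x_2',\dots,x_k'\rangle = \Ga$ — the tuple is redundant.

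For the remaining case $q_F \le q_0$, there are only finitely many possibilities for the pair $(G,F)$, hence finitely many finite simple groups $\De$ in play, and finitely many quasi-simple $\Ga$ above each (bounded Schur multipliers, Remark~\ref{rem:schur.mult}); I handle these by brute force. For each such fixed finite group $\Ga$, the set $V_k(\Ga)$ of generating $k$-tuples is finite for each $k$, and the key observation is that we only need connectivity to a redundant tuple for \emph{some} bound $k \geq c$. One can invoke the elementary fact — already used implicitly in this subject — that for a fixed finite group $\Ga$ there is a bound $k_0(\Ga)$ such that for $k \geq k_0(\Ga)$ every generating $k$-tuple is Nielsen-equivalent to a redundant one (for instance, for $k$ at least about $2d(\Ga)$ one can use a Gasch\"utz-type argument together with the fact that a $k$-tuple generating $\Ga$ with $k$ large contains a generating subset of size $d(\Ga)$, moved into redundant position by Nielsen moves, using Lemma~\ref{lem:Shelly.will.not.give.german.references.again}; alternatively cite~\cite{P}). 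Setting $c(r)$ to be the maximum over the finitely many types $G$ of $c(\rk G)$ from Proposition~\ref{prop:get.redundant.element} and over the finitely many exceptional small $\Ga$ of $k_0(\Ga)$ completes the argument. I expect the main obstacle to be the bookkeeping in the first case: verifying cleanly that Nielsen moves on a tuple of $\Ga$ project to Nielsen moves on the image tuple of $\De \subset G$, that redundancy lifts back using perfectness of $\Ga$, and that "sufficiently general as a subgroup of $G$" is the correct hypothesis to feed Proposition~\ref{prop:get.redundant.element} — which it is, since $G$ is adjoint and $\De$ embeds into $G$ directly.
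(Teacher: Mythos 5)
Your overall strategy matches the paper's: pass to $\De=\Ga/Z(\Ga)=(G^F)^{\der}\subset G$, split into the case where $\De$ is sufficiently general (large $q_F$) and the finitely many remaining cases, apply Proposition~\ref{prop:get.redundant.element} in the former, and lift redundancy back to $\Ga$ via perfectness. Two details, however, deserve attention.

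First, for the finitely many small groups you appeal to a vague "Gasch\"utz-type argument together with the fact that a $k$-tuple generating $\Ga$ with $k$ large contains a generating subset of size $d(\Ga)$." That last fact is not true in general and is in any case unnecessary: the paper's observation is much simpler. If $k>\log_2|\Ga|$, the chain $\langle x_1\rangle\subseteq\langle x_1,x_2\rangle\subseteq\cdots\subseteq\langle x_1,\ldots,x_k\rangle=\Ga$ must have a repetition (each proper step at least doubles the order), so some $x_i$ already lies in $\langle x_1,\ldots,x_{i-1}\rangle$ and the tuple is redundant with \emph{no} Nielsen moves at all. Since $|Z(\Ga)|$ is bounded in terms of $r$, bounded $|\Ga/Z(\Ga)|$ forces bounded $|\Ga|$, and this pigeonhole observation closes the small case.

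Second, in the lifting step your sentence "since $\Ga$ is perfect, $Z(\Ga)$ is contained in the commutator subgroup, hence in any subgroup that surjects onto $\Ga/Z(\Ga)$" does not follow as written: $Z(\Ga)\subseteq[\Ga,\Ga]=\Ga$ is vacuous and does not by itself put $Z(\Ga)$ inside $H$. The correct argument (and the one the paper gives) is: if $H\leq\Ga$ satisfies $HZ(\Ga)=\Ga$, then $H$ is normal and $\Ga/H$ is a quotient of $Z(\Ga)$, hence abelian; perfectness of $\Ga$ then forces $H=\Ga$. (Equivalently, $[\Ga,\Ga]=[HZ(\Ga),HZ(\Ga)]=[H,H]\subseteq H$.) With these two corrections your proposal coincides with the paper's proof.
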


\begin{proof} The claim is trivial for $\Ga$ of bounded size. Indeed, if $k>\log_2(|\Ga|)$, then every generating $k$-tuple is redundant. For a quasi-simple group $\Ga$ of rank $r$, the size of the center $Z(\Ga)$ is bounded by $r$. Therefore, the claim is trivially true if $\Ga/Z(\Ga)$ has bounded size.

The group $\Ga/Z(\Ga)$ is a simple group of Lie type. It is therefore of the form $(G^F)^{\der}$ for a simple, connected and adjoint group $G$, and a Frobenius map $F:G\to G$. Since the rank of $G$ is bounded, we get that $q_F$ tends to infinity as $|(G^F)^{\der}|$ tends to infinity (see Remark \ref{rem:size.field}). Therefore, we may assume that $q_F$ is large enough, and, by Proposition \ref{prop:big.is.general}, that $(G^F)^{\der}$ is sufficiently general.

Let $c=c(r)$ be the constant of Proposition \ref{prop:get.redundant.element}, let $k>c+\log_2 r$, and let $(x_1,\ldots,x_k)$ be a generating tuple of $\Ga$. Denote the images of $x_i$ in $\Ga/Z(\Ga)$ by $\overline{x_i}$. By Proposition \ref{prop:get.redundant.element}, after applying Nielsen moves we obtain a generating tuple $(y_1,\ldots,y_k)$ such that $\overline{y_1},\ldots,\overline{y_c}$ generate $\Ga/Z(\Ga)$. This implies that $y_1,\ldots,y_c$ generate $\Ga$: Let $H$ be the subgroup generated by $y_1,\ldots,y_c$. Then $\Ga=H\cdot Z(\Ga)$, and hence $H$ is normal in $\Ga$ and the quotient $\Ga/H$ is abelian. By perfectness, $H=\Ga$.
\end{proof}

\subsection{Proof of Theorem \ref{thm:main.theorem}}

Let $\Ga$ be a quasi-simple group of Lie type and assume that $\Ga$ is generated by the elements $x_1 ,\ldots ,x_k$. Corollary \ref{cor:get.redundant} shows that there is a
constant $c = c(r)$, which depends only on the Lie rank of $\Ga$, such that
if $k \geq c$, then there are Nielsen moves that can be applied to
connect the generating tuple $(x_1,\ldots,x_k)$ to a redundant
tuple. By replacing $c$ by $c+1$, for every generating $k$-tuple
(where $k\geq c+1$), one can apply Nielsen moves and transform
$(x_1,\ldots ,x_k)$ to a generating tuple with two redundant
elements. This can be done by first applying some Nielsen moves that
yield one redundant generator, say $x_k$, and then since $\Ga =
\langle x_1,\dots,x_{k-1} \rangle$ (where $k-1 \geq c$), one can
apply some more Nielsen moves to get a second redundant generator.

Every finite quasi-simple group is generated by two elements. Let $\ga _1,\ga _2 \in \Ga$ generate $\Ga$. By the last paragraph, we can connect
\[
(x_1,\ldots ,x_k)
\]
to
\[
(y_1,\ldots ,y_{k-2},1,1)
\]
which can be connected to
\[
(y_1,\ldots ,y_{k-2},\ga _1,\ga _2)
\]
and thus to
\[
(1,\ldots ,1,\ga _1,\ga _2).
\]
Therefore every generating $k$-tuple can be connected to $(1,\ldots ,1,\ga _1,\ga _2)$, so $\widetilde{X}_k(\Ga)$ is connected.

\section{Appendix}
In this appendix we describe the adaptation of the proof of Theorem \ref{thm:main.theorem} in the case when the characteristic $p$ is equal to 2 or 3.

The only difference in small characteristics is the definition of the field $\bF_\Ga$ for a sufficiently general subgroup $\Ga$. The definition of $\bF_\Ga$ in this case is as follows: One looks at a Borel subgroup $B$ and a $B$-invariant connected algebraic subgroup $V$ of the center $Z(B)$, such that $\Ga\cap V\neq\{1\}$. We assume that $\dim(V)$ is minimal for all possible such $B$ and $V$. Then it is shown that the subring of $\End(V)$, which is generated by the normalizer of $V$ in $\Ga$, is indeed a field, and is independent of the choice of $B$ and $V$. This field is defined to be $\bF_\Ga$. All the theorems of \cite{LP} that were quoted in this paper are true for this new definition.

If $p>3$, then for any Borel subgroup, the center $Z(B)$ is one dimensional, and we recover the definition of $\bF_\Ga$ that was given in this paper. However, in characteristics 2 and 3 it might happen that $Z(B)$ is two dimensional. This makes our proof of Proposition \ref{prop:pair.suff.gen} incorrect. Proposition \ref{prop:pair.suff.gen} is used in the proofs of Proposition \ref{prop:get.redundant.element} and Lemma \ref{lem:enlarge.defn.field.}. We show now how to amend the proofs of those propositions.

Let $\de(\Ga)$ be the minimum of $\dim(V)$ for all connected algebraic groups $V$ that are contained in the center of a Borel subgroup of $G$, such that $\Ga\cap V\neq\{1\}$. Thus, $\de(\Ga)$ is either equals $1$ or $2$. The proof of Proposition \ref{prop:pair.suff.gen} shows that it remains true for characteristics 2 and 3, if we assume in addition that $\de(\De)=\de(\De')$. Also, Lemma \ref{lem:enlarge.defn.field.} remains true for characteristics 2 and 3 if we assume in addition that $\de(\De_1)=\de(\De_2)=\de(\Ga)$.

Lastly, we show that Proposition \ref{prop:get.redundant.element} remains true as stated in characteristics 2 and 3 (but the constant $c$ changes). We let $c=c_1+4\cdot a_0 \cdot \rk(G)+1$. As in the proof of Proposition \ref{prop:get.redundant.element}, we can assume that $\De_0=\langle x_1,\ldots,x_{c_1}\rangle$ is sufficiently general, and that $x_{c_1+1}$ is regular semisimple. Let
\[
y_i=x_{c_1+1+i} \qq z_i=x_{c_1+1+c_2+i} \qq \widetilde{y_i}=x_{c_1+1+2c_2+i} \qq \widetilde{z_i}=x_{c_1+1+3c_2+i}.
\]
and let $\widetilde{\De_0}=\langle \De_0,x_{c_1+1},y_1,\ldots,y_{c_2},z_1,\ldots,z_{c_2}\rangle$.

If $\de(\widetilde{\De_0})=2$ then for every subgroup $\Upsilon$ of $\widetilde{\De_0}$, we have that $\de(\Upsilon)=2$, and the proof of \ref{prop:get.redundant.element} shows that one can apply Nielsen transformations on the tuple
\[
x_1,\ldots,x_{c_1},x_{c_1+1},y_1,\ldots,y_{c_2},z_1,\ldots,z_{c_2}
\]
and get a redundant tuple. Hence, the same is true for the tuple $x_1,\ldots,x_k$.

On the other hand, if $\de(\widetilde{\De_0})=1$, then for every subgroup $\Upsilon$ of $G$ that contains $\widetilde{\De_0}$, we have that $\de(\Upsilon)=1$. The proof of Proposition \ref{prop:get.redundant.element}, applied to $\widetilde{\De_0},\widetilde{y_i},\widetilde{z_i}$ instead of $\De_0,y_i,z_i$ shows that one can apply Nielsen moves to $x_1,\ldots,x_k$ and get a redundant tuple.


\addcontentsline{toc}{section}{References}

\vspace{\bigskipamount}

\begin{footnotesize}
\begin{quote}

Nir Avni\\
Einstein Institute of Mathematics,\\
The Hebrew University of Jerusalem, \\
Edmond Safra Campus, Givat Ram, \\
 Jerusalem 91904, Israel \\
{\tt  avni.nir@gmail.com} \\

Shelly Garion\\
Einstein Institute of Mathematics, \\
The Hebrew University of Jerusalem, \\
Edmond Safra Campus, Givat Ram, \\
 Jerusalem 91904, Israel \\
{\tt shellyg@math.huji.ac.il} \\

\end{quote}
\end{footnotesize}


\begin{thebibliography}{MMMM}



\bibitem{BP} L. Babai, I. Pak, Strong bias of group generators: an
obstacle to the "product replacement algorithm", {\it Proc. Eleventh
Annual ACM-SIAM Simposium on Discrete Algorithms} (2000).


\bibitem{Ca2}
R.W. Carter, Finite groups of Lie type. Conjugacy classes and
complex characters. Pure and Applied Mathematics (New York). A
Wiley-Interscience Publication. {\em John Wiley and Sons, Inc., New
York}, 1985.

\bibitem{Ca1}
R.W. Carter, Simple groups of Lie type.
Pure and Applied Mathematics (New York). A Wiley-Interscience Publication.
{\em John Wiley and Sons, Inc., New York}, 1972.

\bibitem{CLMNO}
F. Celler, C.R. Leedham-Green, S. Murray, A. Niemeyer, E.A. O'Brien,
Generating random elements of a finite group, \textit{Comm. Alg.}
\textbf{23} (1995), 4931--4948.

\bibitem{CP}
G. Cooperman, I. Pak, The product replacement graph on generating
triples of permutations, preprint, 2000.

\bibitem{Da}
C. David, $T_3$ systems of finite simple groups, \textit{Rend. Sem.
Math. Univ. Padova} \textbf{89} (1993), 19--27.

\bibitem{Du1}
M.J. Dunwoody, On $T$-systems of groups, \textit{J. Austral. Math.
Soc.} \textbf{3} (1963), 172--179.

\bibitem{Du2}
M.J. Dunwoody, Nielsen transformations, \textit{in: Computation
Problems in Abstract Algebra}, Pergamon, Oxford, 1970, 45--46.

\bibitem{E}
M.J. Evans, $T$-systems of certain finite simple groups,
\textit{Math. Proc. Cambridge Philos. Soc.} \textbf{113} (1993),
9--22.

\bibitem{GaP} A. Gamburd, I. Pak,
Expansion of product replacement graphs, {\it Combinatorica} {\bf
26} (2006), no. 4, 411--429.

\bibitem{Ga} S. Garion, Connectivity of the Product Replacement Algorithm Graph of
$\bf{\PSL(2,q)}$ (2007), preprint.

\bibitem{GS} S. Garion, A. Shalev, Commutator maps, measure
preservation, and $T$-systems, \textit{to appear in Trans. Amer.
Math. Soc.}

\bibitem{Gas} W. Gasch\"{u}tz, Zu einem von B. H. und H. Neumann gestellten
problem, {\it Math. Nachr.} {\bf 14} (1955), 249--252.

\bibitem{Gi}
R. Gilman, Finite quotients of the automorphism group of a free
group, \textit{Canad. J. Math.} \textbf{29} (1977), 541--551.


\bibitem{GP} R.M. Guralnick, I. Pak, On a question of B.H.
Neumann, \textit{Proc. Amer. Math. Soc.} \textbf{131} (2003),
2001--2025.

\bibitem{LP1}
A. Lubotzky, I. Pak, The product replacement algorithm and Kazhdan's
property (T), {\it Journal of the AMS } {\bf 52} (2000), 5525--5561.

\bibitem{LP}
M. J. Larsen and R. Pink, Finite subgroups of Algebraic groups
(1998), preprint.

\bibitem{Ne}
B.H. Neumann, On a question of Gasch\"utz, \textit{Arch. Math.}
\textbf{7} (1956), 87--90.

\bibitem{NN}
B.H. Neumann, H. Neumann, Zwei klassen charakteristischer
untergruppen und ihre faktorgruppen, \textit{Math. Nachr.}
\textbf{4} (1951), 106--125.

\bibitem{Ni}
N. Nikolov, On the size of independent sets of generators for finite
simple groups of Lie type (2005), preprint.

\bibitem{P}
I. Pak, What do we know about the product replacement algorithm?,
\textit{in: Groups and computation, III}, de Gruyter, Berlin (2001),
301--347.

\end{thebibliography}
\end{document}